\RequirePackage[l2tabu, orthodox]{nag} 
\documentclass[11pt,a4paper]{article}

\title{Projection method for eigenvalue problems of\\linear nonsquare matrix pencils}
\author{Keiichi Morikuni\thanks{Faculty of Engineering, Information and Systems, University of Tsukuba, 1-1-1 Tennodai, Tsukuba, Ibaraki 305-8573, Japan. Email:~morikuni.keiichi.fw@u.tsukuba.ac.jp. The work was supported in part by the Japan Society for the Promotion of Science (No.~16K17639 and No.~20K14356) and Hattori Hokokai Foundation.}}
\date{}

\usepackage{amsmath, amsfonts, amssymb, amsthm}
\usepackage[all, warning]{onlyamsmath} 
\usepackage{graphicx}
\usepackage[
	bookmarks=false,
	bookmarksnumbered=true,%
	bookmarksopen=false,	
	bookmarkstype=toc,%
	pdfauthor={Keiichi Morikuni},%
	pdfdisplaydoctitle={true},
	pdftitle={Projection method for eigenvalue problems of linear nonsquare matrix pencils},%
	setpagesize={false},%
	urlcolor={red},
	pdfstartview={FitH -32768}
]{hyperref}
\hypersetup{pdfpagemode=UseNone} 

\usepackage {booktabs}
\usepackage{tikz}
\usepackage{arydshln}

\RequirePackage{xcolor}[2.11]
\colorlet{siaminlinkcolor}{green!50!black}
\colorlet{siamexlinkcolor}{red!50!black}
\colorlet{siamreviewcolor}{black!50}

\if@hidelinks
\hypersetup{ hidelinks = true }
\else
\hypersetup{
	colorlinks = true,
	allcolors = siaminlinkcolor,
	urlcolor = siamexlinkcolor,
}
\fi

\RequirePackage[capitalize,nameinlink]{cleveref}[0.19]

\usepackage{enumitem}
\usepackage[T1]{fontenc}
\usepackage{exscale, fix-cm, lmodern, textcomp}
\usepackage[top=1in, bottom=1in, left=1in, right=1in]{geometry}
\usepackage{algorithm, algorithmic}
\usepackage[subrefformat=parens]{subcaption}

\hypersetup{pdfpagemode=UseNone}

\usepackage{upgreek}

\usepackage{autonum} 

\usepackage{algorithm, algorithmic}

\theoremstyle{plain}
\newtheorem{theorem}{Theorem}[section]
\newtheorem{lemma}[theorem]{Lemma}
\theoremstyle{remark}
\newtheorem{remark}[theorem]{Remark}

\numberwithin{equation}{section}
\makeatletter
	
	\@addtoreset{equation}{section}
\makeatother

\makeatletter
  
  \@addtoreset{algorithm}{section}
\makeatother

\makeatletter
  
  \@addtoreset{table}{section}
\makeatother

% Margin between texts and figures
\setlength\textfloatsep{0pt}

\newfont{\bg}{cmr9 scaled\magstep4}
\newcommand{\bigzerol}{\smash{\lower1.0ex\hbox{\bg 0}}}
\newcommand{\bigzerou}{\smash{\hbox{\bg 0}}}

\allowdisplaybreaks

\begin{document}
\maketitle

\begin{abstract}
Eigensolvers involving complex moments can determine all the eigenvalues in a given region in the complex plane and the corresponding eigenvectors of a regular linear matrix pencil.
The complex moment acts as a filter for extracting eigencomponents of interest from random vectors or matrices.
This study extends a projection method for regular eigenproblems to the singular nonsquare case, thus replacing the standard matrix inverse in the resolvent with the pseudoinverse.
The extended method involves complex moments given by the contour integrals of generalized resolvents associated with nonsquare matrices.
We establish conditions such that the method gives all finite eigenvalues in a prescribed region in the complex plane.
In numerical computations, the contour integrals are approximated using numerical quadratures.
The primary cost lies in the solutions of linear least squares problems that arise from quadrature points, and they can be readily parallelized in practice.	
Numerical experiments on large matrix pencils illustrate this method.	
The new method is more robust and efficient than previous methods, and based on experimental results, it is conjectured to be more efficient in parallelized settings.
Notably, the proposed method does not fail in cases involving pairs of extremely close eigenvalues, and it overcomes the issue of problem size.
\end{abstract}

\section{Introduction}
Consider the computation of all finite eigenvalues of a linear matrix pencil~$z B - A \in \mathbb{C}^{m \times n}$, $z \in \mathbb{C}$, $A$, $B \in \mathbb{C}^{m \times n}$ in a prescribed simply connected open set $\Omega \subset \mathbb{C}$
\begin{align}
	A \boldsymbol{x} = \lambda B \boldsymbol{x}, \quad \boldsymbol{x} \in \mathbb{C}^n \setminus \lbrace \boldsymbol{0} \rbrace, \quad \lambda \in \Omega
	\label{eq:evp}
\end{align}
and the corresponding eigenvectors.
The matrix pencil $zB-A$ is said to be regular if $m = n$ and~$\det (z B - A)$ is not identically equal to zero for all $z \in \mathbb{C}$; otherwise, it is singular.
This study focuses on singular cases.
Such problems~\eqref{eq:evp} arise in linear differential-algebraic equations~\cite{KunkelMehrmann2006}, the eigenstate computations of semiconductor quantum wells~\cite{Alharbi2010}, collocation method for approximating the eigenfunctions of the Hilbert--Schmidt operator~\cite[Chapter~12]{FasshauerMcCourt2015}, and supervised dimensionality reduction~\cite{MatsudaMorikuniSakurai2018IJCAI,MatsudaMorikuniImakuraYeSakurai2020}.

A stable and well-established method for computing eigenvalues of linear matrix pencils involves the use of is to use the QZ algorithm~\cite{MolerStewart1973SJNA}.
This method reduces a matrix pencil to a (quasi) triangular form (in which $2 \times 2$ blocks along the diagonal may exist) using a pair of unitary matrices.
Extensions of Kublanovskaya's algorithm~\cite{Kublanovskaya1983} combined with the QZ algorithm and staircase algorithm have been proposed in \cite{Dooren1979, Wilkinson1979}.
These methods use unitary equivalence transformation to determine the Kronecker structure of a linear matrix pencil~$z B - A$, including eigenvalues, Jordan block sides, and minimal indices.
A sophisticated implementation of these methods is the generalized upper triangular (GUPTRI) algorithm~\cite{DemmelKaagstroem1993a, DemmelKaagstroem1993b, GUPTRI, MCSToolbox}.

A new wave of eigensolver's development emerged, initiated by the scaling method~\cite{Goedecker1995,Goedecker1999} for electronic structure analysis, the Sakurai--Sugiura method~\cite{SakuraiSugiura2003JCAM}, and the FEAST algorithm~\cite{Polizzi2009}.
In this class of projection methods, a complex moment consisting of a resolvent filters out undesired eigencomponents and extracts the desired ones in a pseudo-random matrix whose columns are supposed to have eigencomponents corresponding to the eigenvalues of interest.
Thus, methods of this kind project a regular matrix pencil onto the eigenspace associated with eigenvalues in a prescribed region and give the eigenvalues and the corresponding eigenvectors of a regular matrix pencil.
The complex moment results from a contour integral of a resolvent matrix.
In numerical computations, the integral is approximated by a quadrature rule. 
Each quadrature point produces a linear system of equations to solve.
Each linear system can be solved independently, and hence, this kind of methods can be efficiently implemented in parallel.

This framework includes many versions, and it has been applied to other types of eigenproblems: 
\begin{itemize}
	\item Hankel-type (Petrov--Galerkin-type) approach and its block variant for generalized eigenproblems~\cite{SakuraiSugiura2003JCAM,IkegamiSakuraiNagashima2010}, polynomial eigenproblems~\cite{AsakuraSakuraiTadanoIkegamiKimura2010JJIAM}, and nonlinear eigenproblems~\cite{AsakuraSakuraiTadanoIkegamiKimura2009JSIAMLett},  
	\item Rayleigh-Ritz-type approach and its block variant for generalized eigenproblems~\cite{SakuraiTadano2007,IkegamiSakurai2010} and nonlinear eigenproblems~\cite{YokotaSakurai2013JSIAMLett},
	\item FEAST (subspace iteration) for generalized eigenproblems~\cite{Polizzi2009,KestynPolizziTang2016SISC,TangPolizzi2014SIMAX} and its operator analogue~\cite{HorningTownsend2020SINUM},		
	\item Beyn's method for nonlinear eigenproblems~\cite{Beyn2012LAA,BarelKravanja2016JCAM}, and
	\item block Arnoldi approach for generalized eigenproblems~\cite{ImakuraDuSakurai2014,ImakuraSakurai2017RIMS}, cf.~\cite{FangSaad2012SISC}.
\end{itemize}
Another perspective of this class of projection methods is in the filter diagonalization, leading to the design of rational filters~\cite{Murakami2008,AustinTrefethen2015SISC}.
See \cite{ImakuraDuSakurai2016JJIAM} for mutual relationships among projection methods of this kind.

Their advantage is in its hierarchal parallelization.
This feature is suitable for the heterogeneous architectures of modern computers. The typical bottleneck in parallelization is the data transfer among memories in different layers and among processors.
The following hierarchical procedures can be efficiently executed in parallel in the projection methods:
\begin{itemize}
	\item internal eigencomputations for partitioned regions in the top layer~\cite{MiyataDuSogabeYamamotoZhang2009TJSIAM,MaedaSakurai2015}, 
	\item solutions of linear systems arising from quadrature points in the middle layer~\cite{GuttelPolizziTangViaud2015SISC}, and 
	\item computations of each linear solve in the bottom layer~\cite{SaitoTadanoImakura2016TJSIAM,YanoFutamuraImakuraSakurai2018}.
\end{itemize}

This work considers an extension of such a projection method to singular cases.
To this end, motivated by a consideration of spectra of a single nonsquare matrix~\cite[Section 6.7]{Ben-IsraelGreville2003}, the resolvent matrix in the projection method is replaced with the pseudoinverse of the linear matrix pencil.
The integral is also approximated by a numerical quadrature, similarly to the regular case.
Each quadrature point gives a (block) least squares problem to solve, and it can be solved independently.
This extension carries out those features of the above projection methods in parallel by nature.
Note that the proposed method does not attempt to determine the sizes of Kronecker blocks, and thus, it is not naively comparable to GUPTRI.
Also, GUPTRI is not designed to compute the eigenvalues in a prescribed region and the corresponding eigenvectors and is not designed to be efficient for sparse matrices $A$ and $B$.
Note also that the proposed method will be considered in theory for linear matrix pencils whose singular blocks are of size zero, as will be formally assumed later.

\subsection{Preliminary} \label{sec:preliminary}
The spectral properties of the linear matrix pencil $z B - A$ are given by the Kronecker canonical form~\cite[Chapter XII]{Gantmacher1959}, \cite[Section~2.1]{KunkelMehrmann2006}, \cite[Theorem~7.8.1]{Bernstein2018}
\begin{equation}
	P (zB - A) Q = G (z) \oplus K (z) \oplus L (z) \oplus U (z) \in \mathbb{C}^{m \times n},
	\label{eq:KCF}
\end{equation}
where $P \in \mathbb{C}^{m \times m}$ and $Q \in \mathbb{C}^{n \times n}$ are nonsingular matrices,$\oplus$ denotes the direct sum of matrices
\begin{align}
	G (z) & = \oplus_{i=1}^\ell G_i (z) \in \mathbb{C}^{\eta \times \eta}, \\\
	G_i (z) & = z \mathrm{I}_{\eta_i} - J_i \in \mathbb{C}^{\eta_i \times \eta_i}, \quad i = 1, 2, \dots, \ell, \\
	K (z) & = \oplus_{i=1}^p K_i (z) \in \mathbb{C}^{\rho \times \rho}, \\
	K_i (z) & = z N_{\rho_i} - \mathrm{I}_{\rho_i} \in \mathbb{C}^{\rho_i \times \rho_i}, \quad i = 1, 2, \dots, p, \\
	L (z) & = \oplus_{i=1}^q L_i (z) \in \mathbb{C}^{\mu \times (\mu + q)}, \\
	L_i (z) & = z \left[\mathrm{I}_{\mu_i}, \boldsymbol{0}\right]- \left[N_{\mu_i}, \boldsymbol{e}_{\mu_i}\right] \in \mathbb{C}^{\mu_i \times (\mu_i + 1)}, \quad i = 1, 2, \dots, q, \\
	U (z) & = \oplus_{i=1}^r U_i (z) \in \mathbb{C}^{(\nu + r) \times \nu}, \\
	U_i (z) & = z \left[\mathrm{I}_{\nu_i}, \boldsymbol{0}\right]^\mathsf{T} - \left[N_{\nu_i}, \boldsymbol{e}_{\nu_i}\right]^\mathsf{T} \in \mathbb{C}^{(\nu_i + 1) \times \nu_i}, \quad i = 1, 2, \dots, r,
\end{align}
$\eta = \sum_{i=1}^\ell \eta_i$, $\rho = \sum_{i=1}^p \rho_i$, $\mu = \sum_{i=1}^q \mu_i$, $\nu = \sum_{i=1}^r \nu_i$, 
\begin{equation}
	J_i = 
	\begin{bmatrix}
		\lambda_i & 1 & & & \bigzerou \\
		& \lambda_i & 1 \\
		& & \ddots & \ddots \\
		& & & & 1 \\
		\bigzerol & & & & \lambda_i
	\end{bmatrix} \in \mathbb{C}^{\eta_i \times \eta_i}
\end{equation}
is a Jordan block corresponding to finite eigenvalues $\lambda_i$, and $N_{\rho_i} \in \mathbb{C}^{\rho_i \times \rho_i}$ is the shift matrix whose superdiagonal entries are $1$ and the remaining entries are zero.
Here, we denote the identity matrix of size $\eta$ by $\mathrm{I}_\eta \in \mathbb{R}^{\eta \times \eta}$ and the $i$th standard basis vector by $\boldsymbol{e}_i$.
Note that~$G (z)$ and $K (z)$ have the finite and infinite eigenvalues of $zB - A$, respectively.
We call the block matrices $G_i (z)$ and $K_i (z)$ regular blocks.
Hence, matrix pencil~$zB - A$ has $\eta$ finite eigenvalues~$\lambda_{\eta_i}$, $i = 1$, $2$, $\dots$, $\ell$, with multiplicity~$\eta_i$, respectively, and $\rho$ infinite eigenvalues.
Meanwhile, we call $L_i (z)$ and $U_i (z)$ the right and left singular blocks, respectively.
Hence, $q$ and $r$ are the numbers of right and left singular blocks, respectively.
Let $t$ be the number of finite eigenvalues of matrix pencil~$z B - A$ counting multiplicity in the prescribed region~$\Omega$.
Then, without loss of generality, the eigenvalues in the region~$\Omega$ are denoted by $\lambda_1$, $\lambda_2$, $\dots$, $\lambda_s$, and the associated regular blocks are denoted by $G_1 (z)$, $G_2 (z)$, $\dots$, $G_s (z)$.
Then, let $J_\Omega = \oplus_{i=1}^s J_i \in \mathbb{C}^{t \times t}$.
Hence, $t = \sum_{i=1}^s \eta_i$ holds.

Throughout, matrix pencil $zB-A$ is assumed to have left and right singular blocks of size $\mu = \nu = 0$ if not specified otherwise.
Hence, the proposed method aims at determining the eigenvalues of $J_i$ for all $i$ such that $\lambda_i \in \Omega$.
Note that the proposed method does not aim to determine the Kronecker structures such as the minimal indices~$\mu_i$ and $\nu_i$.
Further, the matrix pencil $z B - A$ is assumed to be exact.
In different contexts, perturbations in matrices $A$ and $B$ are assumed \cite{BoutryEladGolubMilanfar2005, DasNeumaier2013, ItoMurota2016}.

\subsection{Organization}
\Cref{sec:proposed} gives the formulation of the proposed method and the condition such that the method determines the desired eigenpairs, and presents the implementation issues.
\Cref{sec:nuex} presents the results of numerical experiments on test matrix pencils.
Finally, \Cref{sec:conc} concludes the paper.

\section{Proposed method} \label{sec:proposed}
This work considers extending the scope of problems that can be solved by a projection method for regular matrix pencils (CIRR~\cite{SakuraiTadano2007, IkegamiSakurai2010}) to singular nonsquare matrix pencils.
The extended method reduces a given nonsquare matrix eigenproblem \eqref{eq:evp} to a generalized square eigenproblem.
The main principle of this approach lies in the $k$th order complex moment matrix, given by the contour integral
\begin{align}
	\mathsf{M}_k = \frac{1}{2 \pi \mathrm{i}} \oint_\Gamma z^k (zB - A)^\dag \mathrm{d}z \in \mathbb{C}^{n \times m}, \quad k = 0, 1, \dots, M - 1,
	\label{eq:moment}
\end{align}
where $\pi$ is the circular constant, $\mathrm{i}$ is the imaginary unit, $\Gamma$ is a positively-oriented closed Jordan curve of which $\Omega$ is the interior, $^\dag$ denotes the Moore--Penrose generalized inverse (pseudoinverse) of a matrix, and $M$ is the order of moments such as $M < n$. (See~\cite[Section~6.7]{Ben-IsraelGreville2003} for the spectral properties of nonsquare matrices.)
Throughout, it is assumed that no eigenvalues of $J_i$ exist in $\Gamma$ for all~$i = 1$, $2$, $\dots$, $\ell$.

We reduce \eqref{eq:evp} to a smaller eigenproblem having the eigenvalues~$\lambda \in \Omega$ of interest.
Let $L \in \mathbb{Z}_{>0}$ such as $L < \min(m, n)$, $T \in\mathbb{C}^{LM \times m}$ and $V \in \mathbb{C}^{m \times L}$ be random matrices, and
\begin{align}
	S & = \left[S_0, S_1, \dots, S_{M-1}\right] \in \mathbb{C}^{n \times LM}, \quad S_k = \mathsf{M}_k V \in \mathbb{C}^{n \times L}. 
	\label{eq:def_S}
\end{align}
Then, the reduction of \eqref{eq:evp} to the square generalized eigenproblem
\begin{align}
	T \! A S \boldsymbol{y} = \lambda T \! B S \boldsymbol{y}, \quad \boldsymbol{y} \in \mathbb{C}^{L M} \setminus \lbrace \boldsymbol{0} \rbrace, \quad \lambda \in \mathbb{C}
	\label{eq:TAS}
\end{align}
is yielded by a Rayleigh--Ritz-like procedure \cite[\S4.3]{Saad2011} over the subspace $\mathcal{R}(S)$ to obtain the desired eigenvalues, where $\mathcal{R}(\cdot)$ denotes the range of a matrix.
\bigskip

We deal with both cases $m \geq n$ and $m < n$ in a unified manner.
To derive conditions such that \eqref{eq:TAS} has eigenpairs of interest, we prepare a form of matrix pencil~$zB-A$.
From the Kronecker canonical form~\eqref{eq:KCF}, the pseudoinverse of matrix pencil~$z B - A$ has the form
\begin{align}	
	& (z B - A)^\dag \notag \\
	& \quad = \left[ P^{-1} (G(z) \oplus K(z) \oplus L(z) \oplus U(z)) Q^{-1} \right]^\dag \notag \\
	& \quad = \left\lbrace \left[ P^{-1} \left( \mathrm{I}_{\eta+\rho+\mu} \oplus U(z) \right) \right] \left[ \left( G(z) \oplus K(z) \oplus L(z) \oplus \mathrm{I}_\nu \right) Q^{-1} \right] \right\rbrace^\dag \notag \\	
	& \quad = \left[ \left( G(z) \oplus K(z) \oplus L(z) \oplus \mathrm{I}_\nu \right) Q^{-1} \right]^\dag \left[ P^{-1} \left( \mathrm{I}_{\eta+\rho+\mu} \oplus U(z) \right) \right]^\dag \notag \\
	& \quad = \left\lbrace \left( G(z) \oplus K(z) \oplus \mathrm{I}_{\mu+\nu} \right) \left[ \mathrm{I}_{\eta+\rho} \oplus L(z) \oplus \mathrm{I}_{\nu} \right] Q^{-1} \right\rbrace^\dag \left[ P^{-1} \left( \mathrm{I}_{\eta+\rho+\nu} \oplus U(z) \right) \right]^\dag \notag \\	
	& \quad = \left[ \left( \mathrm{I}_{\eta+\rho} \oplus L(z) \oplus \mathrm{I}_{\nu} \right) Q^{-1} \right]^\dag \left( G(z) \oplus K(z) \oplus \mathrm{I}_{\mu+\nu} \right)^{-1} \left[ P^{-1} \left( \mathrm{I}_{\eta+\rho+\nu} \oplus U(z) \right) \right]^\dag.	
	\label{eq:(zB-A)^dag}	
\end{align}
Here, we used the fact~\cite[Fact~8.4.23]{Bernstein2018} that if $E \in \mathbb{C}^{\ell \times m}$ has full-column rank and $F \in \mathbb{C}^{m \times n}$ has full-row rank, then $E^\dag E = F F^\dag = \mathrm{I}_m$ and	
\begin{align}	
	(E F)^\dag & = (E^\dag E F)^\dag (E F F^\dag)^\dag \\	
	& = F^\dag E^\dag.	
\end{align}
Hence, $(zB-A)^\dag$ is decomposed into three full-rank matrices~\eqref{eq:(zB-A)^dag}: the first is related to the right singular blocks, the second is related to the regular blocks, and the third is related to the left singular blocks.

Assume that the singular Kronecker blocks $L(z)$, $U (z)$ have size~$\mu = \nu = 0$.
Then, the decomposition~\eqref{eq:(zB-A)^dag} is reduced into
\begin{align}	
	(z B - A)^\dag = \left( \left[ \mathrm{I}_{\eta+\rho}, \mathrm{O}_{\eta+\rho, q} \right] Q^{-1} \right)^\dag \left( G(z) \oplus K(z) \right)^{-1} \left( P^{-1} 
	\begin{bmatrix}
		\mathrm{I}_{\eta+\rho} \\
		\mathrm{O}_{r, \eta+\rho}
	\end{bmatrix}	
	\right)^\dag,
\end{align}
where $\mathrm{O}_{r, q}$ is the zero matrix of size~$r \times q$.
The next lemma gives useful expressions of the first factor~$( [ \mathrm{I}_{\eta+\rho}, \mathrm{O} ] Q^{-1} )^\dag$ and third factor~$( P^{-1} [ \begin{smallmatrix} \mathrm{I}_{\eta+\rho} \\ \mathrm{O} \end{smallmatrix} ] )^\dag$.
Here, $\Pi_\mathcal{S}$ denotes the orthogonal projector onto a subspace~$\mathcal{S}$ and $\mathcal{S}^\perp$ denotes the orthogonal complement of $\mathcal{S}$.

\begin{lemma} \label{lm:C}
	Let $C \in \mathbb{C}^{n \times n}$ be a nonsingular matrix.
	Partition 
	\begin{align}
		C = \left[ C_1, C_2 \right]
	\end{align}
	columnwise into submatrices $C_1 \in \mathbb{C}^{n \times k}$ and $C_2 \in \mathbb{C}^{n \times (n-k)}$.
	Then, we have
	\begin{align}
		\left( \left[ \mathrm{I}_k, \mathrm{O}_{k, n-k} \right] C^{-1} \right)^\dag = \Pi_{\mathcal{R}(C_2)^\perp} C_1.
	\end{align}
\end{lemma}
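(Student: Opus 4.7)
The plan is to identify the matrix $E := [\mathrm{I}_k, \mathrm{O}_{k, n-k}] C^{-1} \in \mathbb{C}^{k \times n}$ as a full-row-rank matrix and invoke the characterization of the pseudoinverse via minimum $2$-norm solutions: for such an $E$, the vector $E^\dag y$ is the minimum-norm member of $\lbrace x : E x = y \rbrace$. First I would collect two block identities that follow from $C^{-1} C = \mathrm{I}_n$, namely
\begin{equation*}
	[\mathrm{I}_k, \mathrm{O}] \, C^{-1} C_1 = \mathrm{I}_k, \qquad [\mathrm{I}_k, \mathrm{O}] \, C^{-1} C_2 = \mathrm{O}_{k, n-k}.
\end{equation*}
These show that $E$ has rank $k$ (so $E E^\dag = \mathrm{I}_k$ and $E^\dag$ delivers minimum-norm solutions) and that $\mathcal{N}(E) = \mathcal{R}(C_2)$.

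Next, for any $y \in \mathbb{C}^k$ I would parametrize the solution set of $E x = y$. Writing $C^{-1} x = [y^\mathsf{T}, v^\mathsf{T}]^\mathsf{T}$ with $v \in \mathbb{C}^{n-k}$ gives $x = C_1 y + C_2 v$; conversely every such $x$ solves $E x = y$ by the identities above. Thus $E^\dag y$ is obtained by minimizing $\lVert C_1 y + C_2 v \rVert_2$ over $v$. Because $C$ is nonsingular, $C_2$ has full column rank and the unique least-squares minimizer is $v_\ast = -C_2^\dag C_1 y$, whence
\begin{equation*}
	E^\dag y = (\mathrm{I}_n - C_2 C_2^\dag) C_1 y = \Pi_{\mathcal{R}(C_2)^\perp} C_1 y.
\end{equation*}
Since $y$ is arbitrary, this yields the claimed formula.

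If one prefers to bypass the minimum-norm characterization, the same formula can be verified directly against the four Moore--Penrose conditions with $X := \Pi_{\mathcal{R}(C_2)^\perp} C_1$: the two block identities above immediately give $E X = \mathrm{I}_k$, which supplies the conditions $(E X)^* = E X$, $E X E = E$, and $X E X = X$; the remaining condition $(X E)^* = X E$ follows by observing that $C_1 [\mathrm{I}_k, \mathrm{O}] C^{-1}$ is the oblique projector onto $\mathcal{R}(C_1)$ along $\mathcal{R}(C_2)$, so that $X E$ fixes $\mathcal{R}(C_2)^\perp$ and annihilates $\mathcal{R}(C_2)$, which identifies it as the Hermitian projector $\Pi_{\mathcal{R}(C_2)^\perp}$.

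No step appears to be a genuine obstacle; the only point that deserves care is the justification that the minimum-norm solution of a consistent system is obtained by orthogonally projecting a particular solution onto the orthogonal complement of the kernel, which amounts to the normal equations for the full-column-rank matrix $C_2$ and uses nonsingularity of $C$ in an essential way (in particular $\mathcal{R}(C_1) \cap \mathcal{R}(C_2) = \lbrace \boldsymbol{0} \rbrace$, which keeps $X$ of full column rank $k$).
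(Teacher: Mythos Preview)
Your argument is correct. Both the minimum-norm route and the direct verification of the Moore--Penrose conditions go through; in particular the identities $E C_1 = \mathrm{I}_k$ and $E C_2 = \mathrm{O}$ (read off from $C^{-1}C = \mathrm{I}_n$) do all the work, and the identification $\mathcal{N}(E) = \mathcal{R}(C_2)$ makes the appearance of $\Pi_{\mathcal{R}(C_2)^\perp}$ transparent.

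The paper, however, argues differently. It also starts from the observation that $E = [\mathrm{I}_k,\mathrm{O}]C^{-1}$ has full row rank, but then invokes the closed formula $E^\dag = E^\mathsf{H}(EE^\mathsf{H})^{-1}$ and computes $(EE^\mathsf{H})^{-1}$ by recognizing it as the $(1,1)$ block of $(C^\mathsf{H}C)^{-1}$; a Schur-complement inversion of the $2\times 2$ block matrix $C^\mathsf{H}C$ then yields $C_1 - C_2 C_2^\dag C_1$ directly. So the paper's proof is purely algebraic (block-matrix inversion plus Schur complement), while yours is geometric (minimum-norm solution as orthogonal projection of a particular solution onto $\mathcal{N}(E)^\perp$). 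Your approach has the advantage of explaining \emph{why} the orthogonal projector onto $\mathcal{R}(C_2)^\perp$ must appear, and it avoids the Schur-complement bookkeeping; the paper's approach is more mechanical but entirely self-contained once the full-row-rank pseudoinverse formula is granted. Both reach the same intermediate expression $C_1 - C_2 C_2^\dag C_1$ before identifying it with $\Pi_{\mathcal{R}(C_2)^\perp} C_1$.
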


\begin{proof}
	Because $\left[ \mathrm{I}_k, \mathrm{O} \right] C^{-1}$ has full-row rank, we have 
	\begin{align}
		\left( \left[ \mathrm{I}_k, \mathrm{O} \right] C^{-1} \right)^\dag
		& = \left( \left[ \mathrm{I}_k, \mathrm{O} \right] C^{-1} \right)^\mathsf{H} \left[ \left( \left[ \mathrm{I}_k, \mathrm{O} \right] C^{-1} \right) \left( \left[ \mathrm{I}_k, \mathrm{O} \right] C^{-1} \right)^\mathsf{H} \right]^{-1} \\
		& = C \left( C^\mathsf{H} C \right)^{-1}
		\begin{bmatrix}
			\mathrm{I}_k \\
			\mathrm{O}
		\end{bmatrix}
		\left\lbrace
		\left[ \mathrm{I}_k, \mathrm{O} \right] (C^\mathsf{H} C)^{-1}
		\begin{bmatrix}
			\mathrm{I}_k \\
			\mathrm{O}
		\end{bmatrix}
		\right\rbrace^{-1}. 
	\end{align}
	Noting that the Schur complement of $C_2^\mathsf{H} C_2$ in the $2 \times 2$ block matrix	
	\begin{align}
		C^\mathsf{H} C = 
		\begin{bmatrix}
			{C_1}^\mathsf{H} C_1 & {C_1}^\mathsf{H} C_2 \\
			{C_2}^\mathsf{H} C_1 & {C_2}^\mathsf{H} C_2 
		\end{bmatrix}
	\end{align}
	is $\mathsf{S} = {C_1}^\mathsf{H} C_1 - {C_1}^\mathsf{H} C_2 ({C_2}^\mathsf{H} C_2)^{-1} {C_2}^\mathsf{H} C_1$ \cite[section~0.7.3]{HornJohnson2013},
	we obtain
	\begin{align}
		\left( \left[ \mathrm{I}_k, \mathrm{O} \right] C^{-1} \right)^\dag 
		& = \left[ C_1, C_2 \right]
		\begin{bmatrix}
			\mathsf{S}^{-1} \\
			- {C_2}^\dag C_1 \mathsf{S}^{-1}
		\end{bmatrix}
		\mathsf{S} \\
		& = C_1 - C_2 {C_2}^\dag C_1 \\
		& = \Pi_{\mathcal{R}(C_2)^\perp} C_1.
	\end{align}
\end{proof}

Partition $Q = \left[ Q_\mathrm{r}, Q_\mathrm{s} \right]$ columnwise into submatrices~$Q_\mathrm{r} \in \mathbb{C}^{n \times (\eta+\rho)}$ and \break $Q_\mathrm{s} \in \mathbb{C}^{n \times r}$ corresponding to the regular and right singular Kronecker blocks, respectively.
Applying \cref{lm:C} to $Q$, we obtain
\begin{align}
	\left( \left[ \mathrm{I}_{\eta+\rho}, \mathrm{O} \right] Q^{-1} \right)^\dag = \Pi_{\mathcal{R}(Q_\mathrm{s})^\perp} Q_\mathrm{r}.
\end{align}
Additionally, partition 
\begin{align}
	P = 
	\begin{bmatrix}
		P_\mathrm{r} \\
		P_\mathrm{s}
	\end{bmatrix}
\end{align}
rowwise into submatrices $P_\mathrm{r} \in \mathbb{C}^{(\eta+\rho) \times m}$ corresponding to the regular Kronecker blocks and $P_\mathrm{s} \in \mathbb{C}^{r \times m}$ corresponding to the left singular Kronecker blocks.
Transposing the statements in \cref{lm:C}, we obtain 
\begin{align}
	\left(
	P^{-1} 
	\begin{bmatrix}
		\mathrm{I}_{\eta+\rho} \\
		\mathrm{O}
	\end{bmatrix}
	\right)^\dag
	= P_\mathrm{r} \Pi_{\mathcal{R}({P_\mathrm{s}}^\mathsf{H})^\perp}.
\end{align}

For convenience, as per the Kronecker block structure, we partition 
\begin{align}
	P = 
	\begin{bmatrix}
		P_1 \\
		P_2 \\
		\vdots \\
		P_{\ell+p+r}
	\end{bmatrix}, \quad
	P_i \in
	\begin{cases}
		\mathbb{C}^{\eta_i \times m}, & i=1, 2, \dots, \ell, \\
		\mathbb{C}^{\rho_i \times m}, & i=\ell+1, \ell+2, \dots, \ell+p, \\
		\mathbb{C}^{1 \times m}, & i=\ell+p+1, \ell+p+2, \dots, \ell+p+r,
	\end{cases}
\end{align}
rowwise into submatrices corresponding to the regular Kronecker blocks regarding the finite eigenvalues, the regular Kronecker blocks regarding the infinite eigenvalues, and the left singular blocks, respectively.
Here, the first column of $P_i$ is the left eigenvector corresponding to $\lambda_i$ and the remaining columns of $P_i$ are the generalized right eigenvectors for $i=1$, $2$, $\dots$, $\ell$.
And, partition
\begin{align}
	\left( P^{-1} 
	\begin{bmatrix}
		\mathrm{I}_{\eta+\rho} \\
		\mathrm{O}
	\end{bmatrix}
	\right)^\dag
	= 
	\begin{bmatrix}
		\hat{P}_1 \\
		\hat{P}_2 \\
		\vdots \\
		\hat{P}_{\ell+p}
	\end{bmatrix} \in \mathbb{C}^{(\eta+\rho) \times m}
\end{align}
rowwise into submatrices 
\begin{align}
	\hat{P}_i & = P_i \Pi_{\mathcal{R}({P_\mathrm{s}}^\mathsf{H})^\perp} \in \mathbb{C}^{\eta_i \times m}, \quad i=1, 2, \dots, \ell, \label{eq:Phat_i} \\
	\hat{P}_{\ell+i} & = P_{\ell+i} \Pi_{\mathcal{R}({P_\mathrm{s}}^\mathsf{H})^\perp} \in \mathbb{C}^{\rho_i \times m}, \quad i=1, 2, \dots, p. \label{eq:Phat_l+i}
\end{align}
Furthermore, according to the Kronecker block structure, we partition 
\begin{align}
	Q = \left[ Q_1, Q_2, \dots, Q_{\ell+p+q} \right], \quad
	Q_i \in 
	\begin{cases}
		\mathbb{C}^{n \times \eta_i}, & i = 1, 2, \dots, \ell, \\
		\mathbb{C}^{n \times \rho_i}, & i = \ell+1, \ell+2, \dots, \ell+p, \\
		\mathbb{C}^{n \times 1}, & i = \ell+p+1, \ell+p+2, \dots, \ell+p+q
	\end{cases}
\end{align}
columnwise into submatrices corresponding to the regular Kronecker blocks regarding the finite eigenvalues, the regular Kronecker blocks regarding the infinite eigenvalues, and the right singular blocks, respectively.
Here, the first column of $Q_i$ is the right eigenvector corresponding to $\lambda_i$ and the remaining columns of $Q_i$ are the generalized right eigenvectors for $i=1$, $2$, $\dots$, $\ell$.
Further, partition 
\begin{align}
	\left( \left[ \mathrm{I}_{\eta+\rho}, \mathrm{O} \right] Q^{-1} \right)^\dag = \left[\hat{Q}_1, \hat{Q}_2, \dots, \hat{Q}_{\ell+p} \right] \in \mathbb{C}^{n \times (\eta+\rho)}
\end{align}
columnwise into 
\begin{align}
	\hat{Q}_i & = \Pi_{\mathcal{R}(Q_\mathrm{s})^\perp} Q_i \in \mathbb{C}^{n \times \eta_i}, \quad i = 1, 2, \dots, \ell, \label{eq:Qhat_i} \\
	\hat{Q}_{\ell+i} & = \Pi_{\mathcal{R}(Q_\mathrm{s})^\perp} Q_{\ell+i} \in \mathbb{C}^{n \times \rho_i}, \quad i = 1, 2, \dots, p. \label{eq:Qhat_l+i}
\end{align}

Now, we express the moment matrix~$\mathsf{M}_k$ of order~$k$ using the Kronecker canonical form via the expansion
\begin{align}
	(z B - A)^\dag = \sum_{i=1}^\ell \hat{Q}_i G_i (z)^{-1} \hat{P}_i + \sum_{i=1}^{p} \hat{Q}_{\ell+i} K_i (z)^{-1} \hat{P}_{\ell+i}.
	\label{eq:(zB-A)^dag_expansion}
\end{align}
The next lemma gives the localized moment matrix in a simple form.
\begin{lemma} \label{lm:Mk}
	Let
	\begin{align}
		\hat{P}_\Omega & =
		\begin{bmatrix}
			\hat{P}_1 \\
			\hat{P}_2 \\
			\vdots \\
			\hat{P}_s \\
		\end{bmatrix}
		\in \mathbb{C}^{t \times m}, \label{eq:def_Phat_Omega} \\
		\hat{Q}_\Omega & = \left[ \hat{Q}_1, \hat{Q}_2, \dots, \hat{Q}_s \right] \in \mathbb{C}^{n \times t}, \label{eq:def_Qhat_Omega}
	\end{align}
	where $\hat{P}_i$ and $\hat{Q}_i$ are defined by \eqref{eq:Phat_i}--\eqref{eq:Phat_l+i} and \eqref{eq:Qhat_i}--\eqref{eq:Qhat_l+i}, respectively.
	If $\mathsf{M}_k$ is the moment matrix \eqref{eq:moment}, then we have
	\begin{align}
		\mathsf{M}_k & = \sum_{i=1}^s \hat{Q}_i J_i^k \hat{P}_i \\
		& = \hat{Q}_\Omega J_\Omega^k \hat{P}_\Omega.
		\label{eq:decomposition_Mk}
	\end{align}
\end{lemma}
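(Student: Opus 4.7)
The plan is to substitute the blockwise expansion \eqref{eq:(zB-A)^dag_expansion} of the pseudoinverse into the contour integral defining $\mathsf{M}_k$, and then evaluate the integrals of the regular-block factors $G_i(z)^{-1}$ and $K_i(z)^{-1}$ separately using the holomorphic functional calculus, exploiting linearity of the contour integral to pull the constant matrices $\hat{Q}_i$ and $\hat{P}_i$ outside.

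First I would write
\begin{align}
	\mathsf{M}_k = \sum_{i=1}^\ell \hat{Q}_i \left( \frac{1}{2 \pi \mathrm{i}} \oint_\Gamma z^k G_i(z)^{-1} \mathrm{d}z \right) \hat{P}_i + \sum_{i=1}^p \hat{Q}_{\ell+i} \left( \frac{1}{2 \pi \mathrm{i}} \oint_\Gamma z^k K_i(z)^{-1} \mathrm{d}z \right) \hat{P}_{\ell+i}.
\end{align}
For the finite-eigenvalue part, since $G_i(z)^{-1} = (z \mathrm{I}_{\eta_i} - J_i)^{-1}$ is the resolvent of the Jordan block $J_i$ with sole eigenvalue $\lambda_i$, the holomorphic functional calculus (equivalently, term-by-term Cauchy integration of the Neumann expansion $(z\mathrm{I}-J_i)^{-1} = \sum_{j \ge 0} (z-\lambda_i)^{-j-1}(J_i - \lambda_i \mathrm{I})^{j}$ truncated after $\eta_i$ terms since $J_i - \lambda_i \mathrm{I}$ is nilpotent) gives
\begin{align}
	\frac{1}{2 \pi \mathrm{i}} \oint_\Gamma z^k (z \mathrm{I}_{\eta_i} - J_i)^{-1} \mathrm{d}z =
	\begin{cases}
		J_i^k, & \lambda_i \in \Omega, \\
		\mathrm{O}, & \lambda_i \notin \Omega,
	\end{cases}
\end{align}
by the assumption that no $\lambda_i$ lies on $\Gamma$. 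For the infinite-eigenvalue part, $K_i(z)^{-1} = (z N_{\rho_i} - \mathrm{I}_{\rho_i})^{-1} = - \sum_{j=0}^{\rho_i-1} z^j N_{\rho_i}^j$ since $N_{\rho_i}$ is nilpotent of index $\rho_i$; this is a matrix polynomial in $z$, hence an entire function, and so its contour integral against $z^k$ vanishes by Cauchy's theorem.

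Combining these, only the indices $i$ with $\lambda_i \in \Omega$, namely $i = 1, 2, \dots, s$, contribute, yielding
\begin{align}
	\mathsf{M}_k = \sum_{i=1}^s \hat{Q}_i J_i^k \hat{P}_i.
\end{align}
The second identity $\mathsf{M}_k = \hat{Q}_\Omega J_\Omega^k \hat{P}_\Omega$ then follows immediately from the block structure $J_\Omega = \oplus_{i=1}^s J_i$ together with the definitions \eqref{eq:def_Phat_Omega}, \eqref{eq:def_Qhat_Omega}, since $J_\Omega^k = \oplus_{i=1}^s J_i^k$ and the block product expands exactly into the stated sum.

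The main technical point, and the only real obstacle, is the rigorous evaluation of the Jordan-block integral. I would handle it by explicitly writing out the entries of $(z\mathrm{I}_{\eta_i} - J_i)^{-1}$ as rational functions in $z$ with a single pole of order $\le \eta_i$ at $\lambda_i$, applying Cauchy's integral formula for higher derivatives to each entry of $z^k (z\mathrm{I}_{\eta_i}-J_i)^{-1}$, and recognizing the resulting sum as the matrix polynomial $J_i^k$. The vanishing of the infinite-eigenvalue contribution is routine once the nilpotency of $N_{\rho_i}$ is invoked.
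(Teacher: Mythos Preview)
Your proposal is correct and follows essentially the same route as the paper: both substitute the blockwise expansion \eqref{eq:(zB-A)^dag_expansion} into the contour integral, evaluate $\frac{1}{2\pi\mathrm{i}}\oint_\Gamma z^k G_i(z)^{-1}\,\mathrm{d}z$ via the finite Neumann series $(z\mathrm{I}-J_i)^{-1}=\sum_{j=0}^{\eta_i-1}(z-\lambda_i)^{-j-1}(J_i-\lambda_i\mathrm{I})^j$ together with Cauchy's integral formula, and dispose of the $K_i$-terms by observing that $K_i(z)^{-1}$ is entire. The paper carries out the Cauchy step explicitly through binomial coefficients before collapsing to $J_i^k$, whereas you invoke the holomorphic functional calculus as a shortcut, but the underlying computation is identical.
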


\begin{proof}
	The expansion~\eqref{eq:(zB-A)^dag_expansion} gives
	\begin{align}
		\mathsf{M}_k & = \sum_{i=1}^\ell \hat{Q}_i \left[ \frac{1}{2 \pi \mathrm{i}} \int_\Gamma z^k G_i (z)^{-1} \mathrm{d} z \right] \hat{P}_i +  \sum_{j=1}^p \hat{Q}_{\ell+i} \left[ \frac{1}{2 \pi \mathrm{i}} \int_\Gamma z^k K_i(z)^{-1} \mathrm{d} z \right] \hat{P}_{\ell+i},
	\end{align}
	in which two contour integrals of $G_i (z)$ and $K_i (z)$ are considered.
	The inverse of a regular Kronecker block corresponding to a finite eigenvalue is of the form
	\begin{align}
		G_i (z)^{-1} = \sum_{j=0}^{\eta_i-1} \frac{1}{(z-\lambda_i)^{j+1}} (J_i - \lambda_i \mathrm{I}_{\eta_i})^j.
	\end{align}
	From the assumption, $G_i (z)$ is regular for all $z \in \Gamma$.
	By using Cauchy's integral formula,	we have
	\begin{align}
		& \frac{1}{2 \pi \mathrm{i}} \oint_\Gamma z^k G_i (z)^{-1} \mathrm{d}z \\
		& = \sum_{j=0}^{\min(k, \eta_i-1)} \left[\frac{1}{2 \pi \mathrm{i}} \oint_\Gamma \frac{z^k}{(z-\lambda_i)^{j+1}} \mathrm{d} z \right] (J_i - \lambda_i \mathrm{I}_{\eta_i})^j \\
		& = 
		\begin{cases}	
			\sum_{j=0}^{\min(k, \eta_i-1)} \binom{k}{k-j} {\lambda_i}^{k-j} (J_i - \lambda_i \mathrm{I}_{\eta_i})^j \quad & \text{for} ~ i ~ \text{such that} ~ \lambda_i \in \Omega, \\
			\mathrm{O} \quad & \text{for} ~ i ~ \text{such that} ~ \lambda_i \not \in \Omega
		\end{cases} \\
		& = 
		\begin{cases}
			{J_i}^k \quad & \text{for} ~ i ~ \text{such that} ~ \lambda_i \in \Omega, \\
			\mathrm{O} \quad & \text{for} ~ i ~ \text{such that} ~ \lambda_i \not \in \Omega,
		\end{cases}
		\label{eq:GJ}
	\end{align}
	where $\binom{k}{i}$ is the binomial coefficient.
	Because $K_i (z)^{-1}$ is regular for all $z \in \mathbb{C}$, we have
	\begin{align}
		\frac{1}{2 \pi \mathrm{i}} \oint_\Gamma z^k {K_i (z)}^{-1} \mathrm{d} z = \mathrm{O}.
		\label{eq:K0}
	\end{align}	
	Therefore, we obtain \eqref{eq:decomposition_Mk}.
\end{proof}

The decomposition~\eqref{eq:decomposition_Mk} reduces to that in \cite[equation~(6)]{IkegamiSakurai2010}, when $m = n$ and no singular Kronecker blocks $L(z)$ and $U(z)$ exist.
Hence, the decomposition~\eqref{eq:decomposition_Mk} is a generalization to the singular case.
Also, \eqref{eq:decomposition_Mk} extends the formula~\cite[equation~(5.8)]{GuttelTisseur2017} to singular linear matrix pencils.

In the following lemma, the localized moment matrix~\eqref{eq:decomposition_Mk} is used to extract the Jordan blocks corresponding to the eigenvalues located in the region~$\Omega$.

\begin{lemma}[{cf.~\cite[Theorem~5]{IkegamiSakuraiNagashima2010}}] \label{lm:Th5_Ikegami_etal}
	Let $L$, $M \in \mathbb{Z}_{> 0}$ such that $\min(m, n) > L \geq t$, $W \in \mathbb{C}^{L \times n}$, $V \in \mathbb{C}^{m \times L}$ be arbitrary matrices, $t$ be the number of eigenvalues of matrix pencil~$z B - A$ in a region~$\Omega$, and $\hat{P}_\Omega$, $\hat{Q}_\Omega$ be defined by \eqref{eq:def_Phat_Omega}, \eqref{eq:def_Qhat_Omega}, respectively.
	If $\mathrm{rank} (W \hat{Q}_\Omega) = \mathrm{rank} (\hat{P}_\Omega V) = t$, then the nonsingular part of the size-reduced matrix pencil~$z W \mathsf{M}_0 V - W \mathsf{M}_1 V$ is equivalent to $z \mathrm{I} - J_\Omega$.
\end{lemma}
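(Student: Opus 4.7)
The plan is to substitute the closed-form expression for the moments from \cref{lm:Mk} directly into the size-reduced pencil, obtaining a rank-revealing factorization that makes the Kronecker structure transparent.

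First I would apply \cref{lm:Mk} with $k = 0$ and $k = 1$ (noting $J_\Omega^0 = \mathrm{I}_t$) to rewrite the reduced pencil as
\begin{align}
z W \mathsf{M}_0 V - W \mathsf{M}_1 V = (W \hat{Q}_\Omega)\, (z \mathrm{I}_t - J_\Omega)\, (\hat{P}_\Omega V).
\end{align}
This is the key identity: an $L \times L$ matrix pencil factored through the width-$t$ regular pencil $z \mathrm{I}_t - J_\Omega$. The rank hypotheses ensure that $W \hat{Q}_\Omega \in \mathbb{C}^{L \times t}$ has full column rank and $\hat{P}_\Omega V \in \mathbb{C}^{t \times L}$ has full row rank.

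Next I would use these rank conditions to strip off the outer factors by strict equivalence. A (thin) QR factorization gives $W \hat{Q}_\Omega = Q_1 R_1$ with $Q_1 \in \mathbb{C}^{L \times t}$ having orthonormal columns and $R_1 \in \mathbb{C}^{t \times t}$ nonsingular; extend $Q_1$ to a unitary $Q = [Q_1, Q_2] \in \mathbb{C}^{L \times L}$. Symmetrically, factor $\hat{P}_\Omega V = L_1 \tilde{P}_1^\mathsf{H}$ with $L_1 \in \mathbb{C}^{t \times t}$ nonsingular and $\tilde{P}_1 \in \mathbb{C}^{L \times t}$ having orthonormal columns, and extend to a unitary $\tilde{P} = [\tilde{P}_1, \tilde{P}_2]$. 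Conjugating the factored pencil by $Q^\mathsf{H}$ on the left and $\tilde{P}$ on the right collapses the off-diagonal blocks and yields the strict equivalence
\begin{align}
Q^\mathsf{H} \bigl[ z W \mathsf{M}_0 V - W \mathsf{M}_1 V \bigr] \tilde{P}
= \begin{bmatrix} R_1 (z \mathrm{I}_t - J_\Omega) L_1 & \mathrm{O} \\ \mathrm{O} & \mathrm{O} \end{bmatrix}.
\end{align}

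Finally I would interpret this block-diagonal form in Kronecker-canonical language: the lower-right zero block of size $L - t$ is purely singular (it contributes only zero-dimensional/zero-minimal-index singular blocks and no finite eigenvalues), while the upper-left block $R_1(z\mathrm{I}_t - J_\Omega)L_1$ is the product of the regular pencil $z\mathrm{I}_t - J_\Omega$ with the nonsingular constant matrices $R_1$ and $L_1$, hence strictly equivalent to $z \mathrm{I}_t - J_\Omega$. Therefore the nonsingular (regular) part of $z W \mathsf{M}_0 V - W \mathsf{M}_1 V$ is equivalent to $z \mathrm{I} - J_\Omega$, proving the claim.

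The main obstacle is largely bookkeeping rather than mathematical depth: one must argue cleanly that the zero $(L - t) \times (L - t)$ block contributes no finite eigenvalues and therefore does not pollute the nonsingular part. This is immediate from the Kronecker canonical form of a zero pencil, but it is the step where one must be explicit about what ``nonsingular part'' means, since the outer pencil is itself singular whenever $L > t$.
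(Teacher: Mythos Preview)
Your proof is correct and follows essentially the same approach as the paper: both invoke \cref{lm:Mk} to factor the reduced pencil as $(W\hat{Q}_\Omega)(z\mathrm{I}_t - J_\Omega)(\hat{P}_\Omega V)$ and then use the full-rank hypotheses to construct nonsingular left/right transformations reducing it to $(z\mathrm{I}_t - J_\Omega)\oplus\mathrm{O}$. The only cosmetic difference is that the paper absorbs your $R_1^{-1}$ and $L_1^{-1}$ directly into the equivalence matrices $\mathsf{Q}$ and $\mathsf{P}$ (choosing them so that $\mathsf{Q}W\hat{Q}_\Omega = \left[\begin{smallmatrix}\mathrm{I}_t\\\mathrm{O}\end{smallmatrix}\right]$ and $\hat{P}_\Omega V\mathsf{P} = [\mathrm{I}_t,\mathrm{O}]$), arriving at the block form in a single step rather than two.
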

\begin{proof}
	Because $\mathrm{rank} (W \hat{Q}_\Omega) = \mathrm{rank} (\hat{P}_\Omega V) = t \leq L$, there exist nonsingular matrices $\mathsf{P} \in \mathbb{C}^{L \times L}$ and $\mathsf{Q} \in \mathbb{C}^{L \times L}$ such that
	\begin{align}
		\hat{P}_\Omega V \mathsf{P} = \left[ \mathrm{I}_t, \mathrm{O} \right], \quad 
		\mathsf{Q} W \hat{Q}_\Omega = 
		\begin{bmatrix}
			\mathrm{I}_t \\
			\mathrm{O}
		\end{bmatrix}.
	\end{align}
	Hence, we have
	\begin{align}
		\mathsf{Q} (z W \mathsf{M}_0 V - W \mathsf{M}_1 V) \mathsf{P} & = \mathsf{Q} \left( z W \hat{Q}_\Omega \mathrm{I}_t \hat{P}_\Omega V - W \hat{Q}_\Omega J_\Omega \hat{P}_\Omega V \right) \mathsf{P} \\
		& = (z \mathrm{I}_t - J_\Omega) \oplus \mathrm{O}.
	\end{align}
\end{proof}

\Cref{lm:Th5_Ikegami_etal} shows that solving the size-reduced eigenproblem~$W \mathsf{M}_1 V \boldsymbol{y} = \break \lambda W \mathsf{M}_0 V \boldsymbol{y}$, $\boldsymbol{y} \ne \boldsymbol{0}$ gives the desired eigenvalues.
Based on \cref{lm:Th5_Ikegami_etal}, the following theorem demonstrates an approach to project the matrix pencil $z B - A$ onto the desired Ritz space to form a size-reduced matrix pencil having the desired eigenvalues.

\begin{theorem} \label{th:equivalence}
	Let $S$ be defined in~\eqref{eq:def_S} and $T \in \mathbb{C}^{LM \times m}$ and $V \in \mathbb{C}^{m \times L}$ be arbitrary matrices.
	If $\mathrm{rank} (\hat{P}_\Omega V) = \mathrm{rank} \left( T P^{-1} [\mathrm{I}_t, \mathrm{O}]^\mathsf{T} \right) = t$, then the nonsingular part of the reduced moment matrix pencil~$zTBS - TAS$ is equivalent to $z \mathrm{I}_t - J_\Omega$.
\end{theorem}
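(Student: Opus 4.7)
The plan is to combine \cref{lm:Mk} with the Kronecker structure of $A$ and $B$ to factor the reduced matrix pencil as $zTBS - TAS = \Phi(z\mathrm{I}_t - J_\Omega)\mathsf{V}$ with outer factors of rank $t$, and then apply the rank-factorization device used in the proof of \cref{lm:Th5_Ikegami_etal}. First, \cref{lm:Mk} gives $\mathsf{M}_k V = \hat{Q}_\Omega J_\Omega^k \hat{P}_\Omega V$, so stacking the $S_k$ blockwise yields
\begin{equation*}
	S = \hat{Q}_\Omega \, \mathsf{V}, \qquad \mathsf{V} := [\hat{P}_\Omega V,\, J_\Omega \hat{P}_\Omega V,\, \dots,\, J_\Omega^{M-1} \hat{P}_\Omega V] \in \mathbb{C}^{t \times LM}.
\end{equation*}
Since $\hat{P}_\Omega V$ has rank $t$ by hypothesis and occupies the leading $t \times L$ block of $\mathsf{V}$, the matrix $\mathsf{V}$ also has rank $t$.

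Next, I would pass $A$ and $B$ through the orthogonal projector hidden in $\hat{Q}_\Omega = \Pi_{\mathcal{R}(Q_\mathrm{s})^\perp}[Q_1,\dots,Q_s]$. The standing assumption $\mu = \nu = 0$ forces the right singular Kronecker blocks $L_i(z)$ to have zero row dimension, so the $q$ columns $Q_\mathrm{s}$ of $Q$ satisfy $P(zB - A)Q_\mathrm{s} = \mathrm{O}$ and hence $AQ_\mathrm{s} = BQ_\mathrm{s} = \mathrm{O}$. Writing $\Pi_{\mathcal{R}(Q_\mathrm{s})^\perp} = \mathrm{I} - Q_\mathrm{s} Q_\mathrm{s}^\dag$ then collapses the projector on the left: $A\hat{Q}_\Omega = A Q_\Omega$ and $B\hat{Q}_\Omega = B Q_\Omega$, where $Q_\Omega := [Q_1, \dots, Q_s]$. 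Reading off the first $t$ columns of $PAQ$ and $PBQ$ from the Kronecker canonical form (with the $J_\Omega$ block sitting in the upper-left corner of $J \oplus \mathrm{I}_\rho$, and identity in $\mathrm{I}_\eta \oplus N_\rho$) gives
\begin{equation*}
	AQ_\Omega = P^{-1}\begin{bmatrix}\mathrm{I}_t \\ \mathrm{O}\end{bmatrix} J_\Omega, \qquad BQ_\Omega = P^{-1}\begin{bmatrix}\mathrm{I}_t \\ \mathrm{O}\end{bmatrix}.
\end{equation*}
Setting $\Phi := TP^{-1}[\mathrm{I}_t, \mathrm{O}]^\mathsf{T} \in \mathbb{C}^{LM \times t}$, which has rank $t$ by the second hypothesis, then yields the clean factorization $zTBS - TAS = \Phi(z\mathrm{I}_t - J_\Omega)\mathsf{V}$. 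Because $\Phi$ and $\mathsf{V}$ both have rank $t$, one can choose nonsingular $\mathsf{Q}, \mathsf{P} \in \mathbb{C}^{LM \times LM}$ with $\mathsf{Q}\Phi = [\mathrm{I}_t, \mathrm{O}]^\mathsf{T}$ and $\mathsf{V}\mathsf{P} = [\mathrm{I}_t, \mathrm{O}]$; then $\mathsf{Q}(zTBS - TAS)\mathsf{P} = (z\mathrm{I}_t - J_\Omega) \oplus \mathrm{O}$, which is the stated equivalence.

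The main obstacle will be correctly justifying the absorption step $A\hat{Q}_\Omega = AQ_\Omega$ and $B\hat{Q}_\Omega = BQ_\Omega$: this is the only place where the standing assumption $\mu = \nu = 0$ is genuinely used, and it relies on the trivial-row-count structure of the $L_i(z)$ blocks making $Q_\mathrm{s}$ a common right null space of $A$ and $B$. Once that is in hand, the remainder is a verbatim copy of the rank-factorization argument already carried out in \cref{lm:Th5_Ikegami_etal}, now applied to outer factors of shape $LM \times t$ and $t \times LM$ rather than $L \times t$ and $t \times L$.
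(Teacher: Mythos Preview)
Your proof is correct and reaches the same factorization $zTBS - TAS = \Phi\,(z\mathrm{I}_t - J_\Omega)\,\mathsf{V}$ that the paper obtains, followed by the same rank-factorization finish from \cref{lm:Th5_Ikegami_etal}. The route, however, is more direct than the paper's. The paper does not use the observation $AQ_\mathrm{s} = BQ_\mathrm{s} = \mathrm{O}$ to collapse the projector; instead it expands $A$ and $B$ via the Kronecker form, computes $P A \mathsf{M}_k$ and $P B \mathsf{M}_k$ through block-matrix algebra (tracking $Q^{-1}(\mathrm{I}_n - Q_\mathrm{s} Q_\mathrm{s}^\dag)Q_\Omega$ explicitly), introduces an auxiliary matrix $W = T P^{-1}(\mathrm{I}_{\eta+\rho}\oplus\mathrm{O})Q^{-1}$ so that $T A \mathsf{M}_k V = W \mathsf{M}_{k+1} V$ and $T B \mathsf{M}_k V = W \mathsf{M}_k V$, and finally identifies $W\hat{Q}_\Omega = T P^{-1}[\mathrm{I}_t,\mathrm{O}]^\mathsf{T}$ by applying $([\mathrm{I}_{\eta+\rho},\mathrm{O}]Q^{-1})^\dag$ before invoking \cref{lm:Th5_Ikegami_etal}. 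Your shortcut---noting that the $\mu=0$ assumption makes each $L_i(z)$ a $0\times 1$ block, so $Q_\mathrm{s}$ lies in the common right null space of $A$ and $B$---bypasses the intermediate $W$ entirely and yields $A\hat{Q}_\Omega = AQ_\Omega$, $B\hat{Q}_\Omega = BQ_\Omega$ in one line. The paper's route has the minor advantage of recording the shift relation $T A \mathsf{M}_k V = W \mathsf{M}_{k+1} V$, which connects to the Hankel structure, but for the purposes of this theorem your argument is shorter and makes the role of the $\mu=0$ hypothesis more transparent.
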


\begin{proof}
	The coefficient matrices are expressed as 
	\begin{align}
		A & = P^{-1} 
		\left[ (\oplus_{i=1}^\ell J_i) \oplus \mathrm{I}_\rho \oplus \mathrm{O}_{r \times q} \right]
		Q^{-1} \\
		& = P^{-1} 
		\begin{bmatrix}
			\mathrm{I}_{\eta+\rho} \\
			\mathrm{O}
		\end{bmatrix}	
		\left[ (\oplus_{i=1}^\ell J_i) \oplus \mathrm{I}_\rho \right] \left[ \mathrm{I}_{\eta+\rho}, \mathrm{O} \right]
		Q^{-1}, \\
		B & = P^{-1} \left[ \mathrm{I}_\eta \oplus \left( \oplus_{i=1}^p N_{\rho_i} \right) \oplus \mathrm{O}_{r \times q} \right] Q^{-1} \\
		& = P^{-1} 
		\begin{bmatrix}
			\mathrm{I}_{\eta+\rho} \\
			\mathrm{O}
		\end{bmatrix}	
		\left[ \mathrm{I}_\eta \oplus \left( \oplus_{i=1}^p N_{\rho_i} \right) \right] \left[ \mathrm{I}_{\eta+\rho}, \mathrm{O} \right]
		Q^{-1}.
	\end{align}
	Then, we have
	\begin{align}
		P A \mathsf{M}_k & =
		\begin{bmatrix}
			\mathrm{I}_{\eta+\rho} \\
			\mathrm{O}
		\end{bmatrix}	
		\left[ \left( \oplus_{i=1}^\ell J_i \right) \oplus \mathrm{I}_\rho \right] \left[ \mathrm{I}_{\eta+\rho}, \mathrm{O} \right] \left[ Q^{-1} \left(\mathrm{I}_n - Q_\mathrm{s} {Q_\mathrm{s}}^\dag \right) Q_\Omega \right] {J_\Omega}^k \hat{P}_\Omega \\
		& =
		\begin{bmatrix}
			\mathrm{I}_{\eta+\rho} \\
			\mathrm{O}
		\end{bmatrix} \left( \left[ \left( \oplus_{i=1}^\ell J_i \right) \oplus \mathrm{I}_\rho \right] \left[ \mathrm{I}_{\eta+\rho}, \mathrm{O} \right]
		\begin{bmatrix}
			\mathrm{I}_t \\
			\mathrm{O} \\
			- {Q_\mathrm{s}}^\dag Q_\Omega
		\end{bmatrix} \right) {J_\Omega}^k \hat{P}_\Omega \\
		& = 
		\begin{bmatrix} 
			\mathrm{I}_{\eta+\rho} \\
			\mathrm{O}
		\end{bmatrix}
		\begin{bmatrix}
			J_\Omega \\
			\mathrm{O}
		\end{bmatrix} 
		{J_\Omega}^k \hat{P}_\Omega \\
		& = 
		\begin{bmatrix} 
			\mathrm{I}_{\eta+\rho} \\
			\mathrm{O}
		\end{bmatrix}
		\begin{bmatrix}
			\mathrm{I}_t \\
			\mathrm{O}
		\end{bmatrix}
		{J_\Omega}^{k+1} \hat{P}_\Omega \\
		& = \left( \mathrm{I}_{\eta+\rho} \oplus \mathrm{O} \right) Q^{-1} \left( \Pi_{\mathcal{R}(Q_\mathrm{s})^\perp} Q_\Omega {J_\Omega}^{k+1} \hat{P}_\Omega \right) \\
		& = \left( \mathrm{I}_{\eta+\rho} \oplus \mathrm{O} \right) Q^{-1} \mathsf{M}_{k+1}
	\end{align}
	and
	\begin{align}
		P B \mathsf{M}_k & = 
		\begin{bmatrix}
			\mathrm{I}_{\eta+\rho} \\
			\mathrm{O}
		\end{bmatrix}	
		\left[ \mathrm{I}_\eta \oplus \left( \oplus_{i=1}^p N_{\rho_i} \right) \right] \left[ \mathrm{I}_{\eta+\rho}, \mathrm{O} \right]
		\left[ Q^{-1} \left( \mathrm{I}_n - Q_\mathrm{s} {Q_\mathrm{s}}^\dag \right) Q_\Omega \right] {J_\Omega}^k \hat{P}_\Omega \\
		& = 
		\begin{bmatrix}
			\mathrm{I}_{\eta+\rho} \\
			\mathrm{O}
		\end{bmatrix}	
		\left( \left[ \mathrm{I}_\eta \oplus \left( \oplus_{i=1}^p N_{\rho_i} \right) \right]
		\left[ \mathrm{I}_{\eta+\rho}, \mathrm{O} \right]
		\begin{bmatrix}
			\mathrm{I}_t \\
			\mathrm{O} \\
			- {Q_\mathrm{s}}^\dag Q_\Omega
		\end{bmatrix} \right)
		{J_\Omega}^k \hat{P}_\Omega \\
		& = 
		\begin{bmatrix}
			\mathrm{I}_{\eta+\rho} \\
			\mathrm{O}
		\end{bmatrix}	
		\begin{bmatrix}
			\mathrm{I}_t \\
			\mathrm{O}
		\end{bmatrix}
		{J_\Omega}^k \hat{P}_\Omega \\
		& = \left( \mathrm{I}_{\eta+\rho} \oplus \mathrm{O} \right) Q^{-1} \left( \Pi_{\mathcal{R}(Q_\mathrm{s})^\perp} Q_\Omega {J_\Omega}^k \hat{P}_\Omega \right) \\
		& = \left( \mathrm{I}_{\eta+\rho} \oplus \mathrm{O} \right) Q^{-1} \mathsf{M}_k.
	\end{align}
	Here, we used 
	\begin{align}
		Q^{-1} Q_\Omega & = 
		\begin{bmatrix}
			\mathrm{I}_t \\
			\mathrm{O}
		\end{bmatrix} \in \mathbb{R}^{n \times t}, \\
		Q^{-1} Q_\mathrm{s} & = 
		\begin{bmatrix}		
			\mathrm{O} \\
			\mathrm{I}_q 
		\end{bmatrix} \in \mathbb{R}^{n \times q}, \\
		\left( \left[ \mathrm{I}_{\eta+\rho}, \mathrm{O} \right] Q^{-1} \right) \left( \left[ \mathrm{I}_{\eta+\rho}, \mathrm{O} \right] Q^{-1} \right)^\dag & = \left( \left[ \mathrm{I}_{\eta+\rho}, \mathrm{O} \right] Q^{-1} \right) \Pi_{\mathcal{R}(Q_\mathrm{s})^\perp} \left[ Q_\Omega, Q_{s+1}, \dots Q_{\ell+p} \right] \\
		& = \mathrm{I}_t \oplus \mathrm{I}_{\eta+\rho-t}.
	\end{align}
	Now, let
	\begin{align}
		W = T P^{-1} (\mathrm{I}_{\eta+\rho} \oplus \mathrm{O}) Q^{-1}.
		\label{eq:def_C}
	\end{align}
	Then, we have
	\begin{align}
		T A \mathsf{M}_k V & = W \mathsf{M}_{k+1} V, \\
		T B \mathsf{M}_k V & = W \mathsf{M}_k V.
	\end{align}
	Applying $\left( \left[ \mathrm{I}_{\eta+\rho}, \mathrm{O} \right] Q^{-1} \right)^\dag$ to both sides of the equation~\eqref{eq:def_C}, we have
	\begin{align}
		W \left( \left[ \mathrm{I}_{\eta+\rho}, \mathrm{O} \right] Q^{-1} \right)^\dag = T P^{-1} 
		\begin{bmatrix}
			\left[ \mathrm{I}_{\eta+\rho}, \mathrm{O} \right] Q^{-1} \left( \left[ \mathrm{I}_{\eta+\rho}, \mathrm{O} \right] Q^{-1} \right)^\dag \\
			\mathrm{O}
		\end{bmatrix},
	\end{align}
	or 
	\begin{align}
		W \Pi_{\mathcal{R}(Q_\mathrm{s})^\perp} Q_\mathrm{r} = T P^{-1} 
		\begin{bmatrix}
			\mathrm{I}_{\eta+\rho} \\
			\mathrm{O}
		\end{bmatrix}.
		\label{eq:CQ}
	\end{align}
	The first~$t$ columns of both sides of \eqref{eq:CQ} form
	\begin{align}
		W \hat{Q}_\Omega = T P^{-1} 
		\begin{bmatrix}
			\mathrm{I}_t \\
			\mathrm{O}
		\end{bmatrix}.
	\end{align}
	Hence, from the assumption, we have
	\begin{align}
		\mathrm{rank} (W \hat{Q}_\Omega) & = \mathrm{rank} \left( T P^{-1} 
		\begin{bmatrix}
			\mathrm{I}_t \\
			\mathrm{O}
		\end{bmatrix} 
		\right) \\
		& = t.
	\end{align}
	Therefore, because $\mathrm{rank}(W \hat{Q}_\Omega) = \mathrm{rank} (\hat{P}_\Omega V) = t$, \cref{lm:Th5_Ikegami_etal} gives the assertion.	
\end{proof}

The following lemma prepares the proof of the main theorem.

\begin{lemma} \label{lm:over}
	Let $L$, $M \in \mathbb{Z}_{>0}$, $J_\Omega$, $\hat{P}_\Omega$, $\hat{Q}_\Omega$ be defined as in \cref{lm:Mk}, \break $V \in \mathbb{C}^{m \times L}$, $S$ be defined as in~\eqref{eq:def_S}, and 
	\begin{align}
		Y = \left[ \hat{P}_\Omega V, J_\Omega \hat{P}_\Omega V, \dots, J_\Omega^{M-1} \hat{P}_\Omega V \right] \in \mathbb{C}^{m \times LM}.
	\end{align}
	Then, the equalities~$S = \hat{Q}_\Omega Y$ and $\mathrm{rank} (S) = \mathrm{rank} (Y)$ hold.
\end{lemma}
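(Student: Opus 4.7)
The plan is to establish the two assertions of \cref{lm:over} separately, both essentially as consequences of \cref{lm:Mk} and of the linear independence of the columns of $Q$. For the factorization $S = \hat{Q}_\Omega Y$, I would simply substitute the decomposition $\mathsf{M}_k = \hat{Q}_\Omega J_\Omega^k \hat{P}_\Omega$ from \cref{lm:Mk} into the definition $S_k = \mathsf{M}_k V$, obtaining $S_k = \hat{Q}_\Omega J_\Omega^k \hat{P}_\Omega V$ for each $k = 0, 1, \dots, M-1$. Concatenating the blocks $S_0, S_1, \dots, S_{M-1}$ horizontally then pulls $\hat{Q}_\Omega$ out as a common left factor, which is exactly the asserted identity. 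This step is essentially bookkeeping and should occupy only a line or two.

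The rank equality $\mathrm{rank}(S) = \mathrm{rank}(Y)$ then reduces to the claim that $\hat{Q}_\Omega$ has full column rank $t$, since left multiplication by an injective linear map preserves the dimension of the column space. To verify full column rank, I would suppose $\hat{Q}_\Omega v = \Pi_{\mathcal{R}(Q_\mathrm{s})^\perp} Q_\Omega v = 0$ for some $v \in \mathbb{C}^t$; then $Q_\Omega v$ lies in $\mathcal{R}(Q_\mathrm{s})$, so $Q_\Omega v = Q_\mathrm{s} w$ for some $w \in \mathbb{C}^q$. Because $Q = [Q_\mathrm{r}, Q_\mathrm{s}]$ is nonsingular, the submatrix $[Q_\Omega, Q_\mathrm{s}]$, which is formed from a subcollection of the columns of $Q$, itself has linearly independent columns, so the equation $Q_\Omega v - Q_\mathrm{s} w = 0$ forces $v = 0$ (and $w = 0$). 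Hence $\hat{Q}_\Omega$ is injective, and $\mathrm{rank}(\hat{Q}_\Omega Y) = \mathrm{rank}(Y)$ follows from standard rank algebra.

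The main (rather mild) obstacle lies in not over-reading the role of the orthogonal projector in the second step: one should not try to invoke orthogonality of $\mathcal{R}(Q_\Omega)$ and $\mathcal{R}(Q_\mathrm{s})$, which is nowhere assumed, but only the linear independence of the columns of $Q$. The projector $\Pi_{\mathcal{R}(Q_\mathrm{s})^\perp}$ annihilates exactly the vectors inside $\mathcal{R}(Q_\mathrm{s})$, and column independence of $[Q_\Omega, Q_\mathrm{s}]$ is precisely what forbids $\mathcal{R}(Q_\Omega)$ from meeting $\mathcal{R}(Q_\mathrm{s})$ nontrivially. Beyond this conceptual point, both claims fall out directly from the decomposition supplied by \cref{lm:Mk} and the Kronecker-canonical-form setup.
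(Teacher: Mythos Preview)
Your proposal is correct and follows essentially the same route as the paper: both obtain $S = \hat{Q}_\Omega Y$ by substituting the decomposition of \cref{lm:Mk} into the definition of $S$, and both deduce $\mathrm{rank}(S) = \mathrm{rank}(Y)$ from the full column rank of $\hat{Q}_\Omega$, which in turn rests on $\mathcal{R}(Q_\Omega) \cap \mathcal{R}(Q_\mathrm{s}) = \{\boldsymbol{0}\}$. The only cosmetic difference is that the paper invokes an external lemma for the implication ``$\mathcal{R}(Q_\Omega) \cap \mathcal{R}(Q_\mathrm{s}) = \{\boldsymbol{0}\} \Rightarrow \mathrm{rank}(\Pi_{\mathcal{R}(Q_\mathrm{s})^\perp} Q_\Omega) = \mathrm{rank}(Q_\Omega)$'', whereas you supply the direct kernel argument.
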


\begin{proof}
	From the definition~\cref{eq:def_S} of $S$ and \cref{lm:Mk}, it follows that
	\begin{align}
		S & = \left[ \mathsf{M}_0 V, \mathsf{M}_1 V, \dots, \mathsf{M}_{M-1} V \right] \\
		& = \left[ \hat{Q}_\Omega \hat{P}_\Omega V, \hat{Q}_\Omega J_\Omega \hat{P}_\Omega V, \dots, \hat{Q}_\Omega {J_\Omega}^{M-1} \hat{P}_\Omega V \right] \\
		& = \hat{Q}_\Omega \left[ \hat{P}_\Omega V, J_\Omega \hat{P}_\Omega V, \dots, {J_\Omega}^{M-1} \hat{P}_\Omega V \right] \\
		& = \hat{Q}_\Omega Y.
	\end{align}	
	Further, from \cite[Lemma~4.1]{YanaiTakeuchiTakane2011}, it follows that because $\mathcal{R} (Q_\Omega) \cap \mathcal{R} (Q_\mathrm{s}) = \lbrace \boldsymbol{0} \rbrace$, we have
	\begin{align}
		\mathrm{rank}(\Pi_{\mathcal{R}(Q_\mathrm{s})^\perp} Q_\Omega) & = \mathrm{rank}(Q_\Omega) \\
		& = t,
	\end{align}
	i.e., $\hat{Q}_\Omega = \Pi_{\mathcal{R}(Q_r)^\perp} Q_\Omega$ has full-column rank.
	Hence, $\mathrm{rank} (S) = \mathrm{rank} (Y)$ holds.
\end{proof}

Now, the main theorem can be proved.

\begin{theorem} \label{th:over}
	Let $L \in \mathbb{Z}_{>0}$, $V \in \mathbb{C}^{m \times L}$, $J_\Omega$, $\hat{P}_\Omega$, $\hat{Q}_\Omega \in \mathbb{C}^{n \times t}$ be defined as in \cref{lm:Mk}, $S$ be defined in~\eqref{eq:def_S}, and $t$ be the number of eigenvalues of matrix pencil~$z B - A$ counting multiplicity.	
	Then, $\mathrm{rank} (S) = t$ holds if and only if $\mathcal{R}(\hat{Q}_\Omega) = \mathcal{R}(S) \supseteq \mathcal{R}(X_\Omega)$ holds, where the columns of $X_\Omega$ are the eigenvectors corresponding to the eigenvalues in~$\Omega$.
\end{theorem}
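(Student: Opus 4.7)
The plan is to build on \cref{lm:over}, which supplies the factorization $S = \hat{Q}_\Omega Y$ and certifies that $\hat{Q}_\Omega \in \mathbb{C}^{n \times t}$ has full column rank. Together these give $\mathcal{R}(S) \subseteq \mathcal{R}(\hat{Q}_\Omega)$ and $\dim \mathcal{R}(\hat{Q}_\Omega) = t$, so the iff reduces to a dimension comparison inside a fixed $t$-dimensional ambient subspace, supplemented by an unconditional verification of the chained inclusion $\mathcal{R}(\hat{Q}_\Omega) \supseteq \mathcal{R}(X_\Omega)$.

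For the forward direction I would assume $\mathrm{rank}(S) = t$; then the $t$-dimensional subspace $\mathcal{R}(S)$ sitting inside the $t$-dimensional space $\mathcal{R}(\hat{Q}_\Omega)$ must coincide with it, yielding $\mathcal{R}(S) = \mathcal{R}(\hat{Q}_\Omega)$. For the reverse, the equality $\mathcal{R}(S) = \mathcal{R}(\hat{Q}_\Omega)$ directly forces $\mathrm{rank}(S) = \dim \mathcal{R}(\hat{Q}_\Omega) = t$; the trailing clause $\mathcal{R}(S) \supseteq \mathcal{R}(X_\Omega)$ is not needed in this direction but is automatic once the eigenvector inclusion below is in hand.

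To close the loop, the inclusion $\mathcal{R}(\hat{Q}_\Omega) \supseteq \mathcal{R}(X_\Omega)$ should be established as an unconditional fact. By the Kronecker canonical form the first column of each $Q_i$ ($i = 1, \dots, s$) is a right eigenvector at $\lambda_i \in \Omega$, while under the standing assumption $\mu = \nu = 0$ the right singular part of the form forces $(zB - A) Q_\mathrm{s} = \boldsymbol{0}$ for every $z \in \mathbb{C}$, so eigenvectors of the singular pencil are only well defined modulo $\mathcal{R}(Q_\mathrm{s})$. Taking the canonical representative orthogonal to $\mathcal{R}(Q_\mathrm{s})$ produces $\Pi_{\mathcal{R}(Q_\mathrm{s})^\perp} Q_i \boldsymbol{e}_1$, which by \eqref{eq:Qhat_i} is the first column of $\hat{Q}_i$ and hence a column of $\hat{Q}_\Omega$, yielding the claim. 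The main obstacle is precisely this interpretive step: the eigenvector equation $A \boldsymbol{x} = \lambda B \boldsymbol{x}$ in the singular setting is satisfied on a full coset of $\mathcal{R}(Q_\mathrm{s})$ rather than on a line, so one must commit to the canonical orthogonal choice for $X_\Omega$ before the literal inclusion makes sense; once that convention is fixed, everything else in the proof collapses to one line of dimension counting together with the definition of $\hat{Q}_i$.
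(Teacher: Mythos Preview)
Your approach is essentially identical to the paper's: both invoke \cref{lm:over} to get $S = \hat{Q}_\Omega Y$ with $\hat{Q}_\Omega$ of full column rank~$t$, reduce the equivalence to a dimension comparison inside $\mathcal{R}(\hat{Q}_\Omega)$, and then assert $\mathcal{R}(\hat{Q}_\Omega) \supseteq \mathcal{R}(X_\Omega)$ from the definitions. Your discussion of the eigenvector-representative ambiguity modulo $\mathcal{R}(Q_\mathrm{s})$ is in fact more careful than the paper's one-line appeal to ``the definitions of $\hat{Q}_\Omega$ and $X_\Omega$,'' which leaves that normalization implicit.
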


\begin{proof}
	From \cref{lm:over}, it follows that $\mathcal{R} (S) = \mathcal{R} (\hat{Q}_\Omega Y) = \mathcal{R} (\hat{Q}_\Omega)$ holds if and only if $\mathrm{rank} (S) = t$ holds.
	The definitions of $\hat{Q}_\Omega$ and $X_\Omega$ give~$\mathcal{R}(\hat{Q}_\Omega) \supseteq \mathcal{R}(X_\Omega)$.
\end{proof}

\begin{remark}
	The desired eigenvector $\boldsymbol{x}$ can be obtained from $\boldsymbol{y}$ of \eqref{eq:TAS} by $\boldsymbol{x} = S \boldsymbol{y}$, where $\boldsymbol{y}$ is the eigenvector corresponding to an eigenvalue $\lambda$ of~\eqref{eq:TAS}.
\end{remark}

\begin{remark}
	The principle behind this formulation is the construction of a filter for eigencomponents.
	Combining \cref{lm:over} with \cref{th:over} shows that the orthogonal projector~$\Pi_{\mathcal{R}(Q_\mathrm{s})^\perp}$ in $S$ filters out the $\mathcal{R}(Q_\mathrm{s})$ component in the vector~$Q_\Omega Y \boldsymbol{y}$, i.e., stop undesired eigencomponents.
\end{remark}

\begin{remark}
	\Cref{th:over} shows that the numbers of $L$ and $M$ must be chosen to satisfy $L M \geq t$ to obtain the eigenvalues in $\Omega$.
	The number $t$ of eigenvalues in $\Omega$ would be estimated using analogous techniques given in \textrm{\cite{FutamuraTadanoSakurai2010, MaedaFutamuraImakuraSakurai2015JSIAMLetters, NapoliPolizziSaad2016, VecharynskiYang2017}}.
	High-order complex moments with~$M > 1$ enables one to efficiently enlarge the space~$\mathcal{R}(S)$.
\end{remark}

\begin{remark}
	The condition~$\mathrm{rank}(Y) = t$ in \cref{th:over} is not necessarily satisfied even if $\hat{P}_\Omega V$ has full-column rank.
\end{remark}

\subsection{Implementation}
In numerical computations, the contour integral~\eqref{eq:moment} is approximated using the $N$-point trapezoidal quadrature rule
\begin{align}
	\tilde{\mathsf{M}}_k = \sum_{j = 1}^N w_j z_j^k (z_j B - A)^\dag \simeq \mathsf{M}_k,
\end{align} 
where $z_j$ is a quadrature point and $\omega_j$ is its corresponding weight.
Thus, we obtain the approximations
\begin{align}
	\tilde{S}_k & = \tilde{\mathsf{M}}_k V, \quad \tilde{S} = \left[ \tilde{S}_0, \tilde{S}_1, \dots, \tilde{S}_{M-1} \right] \simeq S.
\end{align}
Moreover, to reduce computational costs and improve numerical stability, a low-rank approximation of the reduction~\eqref{eq:TAS} is applied using principal basis vectors of $\mathcal{R}(\tilde{S})$ associated with the truncated singular value decompositions (TSVD) of $\tilde{S}$
\begin{align}
	\tilde{S} & = U_\mathrm{S} \Sigma_\mathrm{S} {V_\mathrm{S}}^\mathsf{H} \\
	& = \left[ U_{\mathrm{S}, 1}, U_{\mathrm{S}, 2} \right] (\Sigma_{\mathrm{S}, 1} \oplus \Sigma_{\mathrm{S}, 2}) \left[ V_{\mathrm{S}, 1}, V_{\mathrm{S}, 2} \right]^\mathsf{H} \\
	& \simeq U_{\mathrm{S}, 1} \Sigma_{\mathrm{S}, 1} {V_{\mathrm{S}, 1}}^\mathsf{H},
	\label{eq:lowrankapprox}
\end{align}
where the columns of $U_\mathrm{S} = [U_{\mathrm{S}, 1}, U_{\mathrm{S}, 2}]$ are the left singular vectors of $\tilde{S}$, the columns of $V_\mathrm{S} = [V_{\mathrm{S}, 1}, V_{\mathrm{S}, 2}]$ are the right singular vectors of $\tilde{S}$, and $\Sigma_\mathrm{S} = \Sigma_{\mathrm{S}, 1} \oplus \Sigma_{\mathrm{S}, 2} \in \mathbb{R}^{n \times m}$ is a diagonal matrix whose diagonal entries are the singular values of $\tilde{S}$, arranged in decreasing order.
Here, the diagonal entries of $\Sigma_{\mathrm{S}, 1}$ are the dominating singular values of $\tilde{S}$, and the columns of $U_{\mathrm{S}, 1}$ and $V_{\mathrm{S}, 1}$ are the corresponding left and right singular vectors, respectively.
A practical way to determine the size~$\tau$ of $\Sigma_{S, 1} \in \mathbb{R}^{\tau \times \tau}$ will be given in Section~3.
Hence, $U_{\mathrm{{S}}, 1} \Sigma_{\mathrm{{S}}, 1} {V_{\mathrm{{S}}, 1}}^\mathsf{H}$ is a low-rank approximation of $\tilde{S}$.

Thus, by solving the reduced square generalized eigenproblem 
\begin{align}
	\tilde{T}^\mathsf{T} \! A U_{\mathrm{S}, 1} \tilde{\boldsymbol{y}} = \tilde{\lambda} \tilde{T}^\mathsf{T} \! B U_{\mathrm{S}, 1} \tilde{\boldsymbol{y}},
	\label{eq:prj_gevp}
\end{align}
where $\tilde{T} \in \mathbb{C}^{m \times \tau}$ is a random matrix, the approximate eigenpair $(\tilde{\lambda}, \tilde{\boldsymbol{x}}) = (\tilde{\lambda}, U_{\mathrm{S}, 1} \tilde{\boldsymbol{y}})$ can be obtained.
These procedures are summarized in \cref{alg}.

\begin{algorithm}
	\caption{Proposed method.}
	\label{alg}
	\begin{algorithmic}[1]
		\STATE Set $L$, $M$, $N \in \mathbb{Z}_{>0}$, $V \in \mathbb{C}^{m \times L}$, $(z_j, \omega_j)$, $j = 1, 2, \dots, N$.
		\STATE Compute $\tilde{S}_k = \sum_{j=1}^N \omega_j z_j^k (z_j B - A)^\dag V$; Set $\tilde{S} = [\tilde{S}_0, \tilde{S}_1, \dots, \tilde{S}_{M-1}]$.
		\STATE Compute SVD of $\tilde{S} = \left[ U_{\mathrm{S}, 1}, U_{\mathrm{S}, 2} \right] (\Sigma_{\mathrm{S}, 1} \oplus \Sigma_{\mathrm{S}, 2}) \left[ V_{\mathrm{S}, 1}, V_{\mathrm{S}, 2} \right]^\mathsf{H}$.
		\STATE Compute the eigenpairs~$(\tilde{\lambda}, \boldsymbol{y})$ of $\tilde{T}^\mathsf{T} \! A U_{\mathrm{S}, 1} \boldsymbol{y} = \tilde{\lambda} \tilde{T}^\mathsf{T} \! B U_{\mathrm{S}, 1} \boldsymbol{y}$.
		\STATE Compute the approximate eigenpairs $(\tilde{\lambda}, \tilde{x}) = (\tilde{\lambda}, U_{\mathrm{S}, 1} \tilde{\boldsymbol{y}})$.
	\end{algorithmic}
\end{algorithm}

\begin{remark}
	The computation of the pseudoinverse solution~$(z_j B - A)^\dag V$ in line 2 of \cref{alg} requires the largest cost.
	It is convenient that independent computations can be performed for each $j$ in parallel.
\end{remark}

Further, the pseudoinverse solution $(z_j B - A)^\dag V$ in line 2 of \cref{alg} may be efficiently computed by solving the minimum-norm least squares problem
\begin{align}
	& (z_j B - A)^\dag \boldsymbol{v}_i = \operatorname*{arg\,min}_{\boldsymbol{y} \in \mathbb{C}^n} \| \boldsymbol{y} \|_2, \label{eq:LS} \\
	& \qquad \text{subject to} \quad \min \| \boldsymbol{v}_i - (z_j B - A) \boldsymbol{y} \|_2, \quad i = 1, 2, \dots, L
\end{align}
by using (preconditioned) iterative solvers such as the CGLS, LSQR, LSMR, and AB- and BA-GMRES methods \cite{HestenesStiefel1952, PaigeSaunders1982a, FongSaunders2011, MorikuniHayami2013, MorikuniHayami2015}, where $\boldsymbol{v}_i$ is the $i$th column of the random matrix $V$ and $\| \cdot \|_2$ denotes the Euclidean norm, or the minimum-norm least squares problem with multiple right-hand side
\begin{align}
	(z_j B - A)^\dag V = \operatorname*{arg\,min}_{Y \in \mathbb{C}^{n \times L}} \| Y \|_\mathsf{F}, \quad \text{subject to} \quad \min \| V - (z_j B - A) Y \|_\mathsf{F}
\end{align}
by using iterative solvers such as the block CGLS and LSQR methods~\cite{JiLi2017, KarimiToutounian2006AMC} and the global CGLS method~\cite{LvHuangJiangLiu2015}, where $\| \cdot \|_\mathsf{F}$ denotes the Frobenius norm.
The $L$ psuedoinverse solutions~$(z_j B - A)^\dag \boldsymbol{v}_i$, $i= 1$, $2$  $\dots$, $L$, can be computed in parallel by solving the problems~\eqref{eq:LS}.

\section{Numerical experiments} \label{sec:nuex}
Numerical experiments show that the proposed method (\cref{alg}) is superior to previous methods in terms of efficiency and robustness.
The efficiency and robustness were evaluated in terms of CPU time and the relative residual norm (RRN)
\begin{align}
	\frac{\| A \boldsymbol{x} - \lambda B \boldsymbol{x} \|_2}{\| A \|_\mathsf{F} + |\lambda| \| B \|_\mathsf{F}}.
\end{align}

All computations were performed on a computer with an Intel Core i7-8565U 1.80 GHz central processing unit (CPU), 16 GB of  random-access memory (RAM), and the Microsoft Windows 10 Pro 64 bit Version 20H2 operating system.
All programs for implementing the proposed method were coded and run in MATLAB R2020b for double precision floating-point arithmetic with unit roundoff~$u = 2^{-53} \simeq 1.1\cdot10^{-16}$.
The compared implementation of GUPTRI was in the Matrix Canonical Structure (MCS) Toolbox~\cite{MCSToolbox}.

For the proposed method, the quadrature points $z_i$ were set to $z_i = \gamma + R \exp \left(\mathrm{i} \theta_i \right)$, $\theta_i = (2i-1) \pi / N $, $i = 1$, $2$, $\dots$, $N$, on the circle with center~$\gamma = 1 + \mathrm{i}$ and radius~$R$.
The MATLAB function~\texttt{pinv} was used to compute~$(z_j B - A)^\dag V$ for small sizes~$\min(m,  n) < 1000$.
On the other hand, the global CGLS method~\cite{JiLi2017} was used to compute it for large sizes~$\min(m, n) > 1000$ for efficiency.
Here, the initial guess was set zero and the stopping criteria for the global CGLS iteration were given by the threshold $10^{-14}$ in terms of the relative residual norm and the maximum number of iterations $\min(m, n)$.
In the row-rank approximation of $\tilde{S}$~\eqref{eq:lowrankapprox}, the rank was truncated such that the size of $\Sigma_\mathrm{S, 1}$ was maximized subject to the constraint on its condition number not exceeding $1/u$.
Here, the condition number of a matrix is the ratio of the largest singular value to the smallest value of the matrix.
The eigenpairs~$(\tilde{\lambda}, \boldsymbol{y})$ in line~4 of \cref{alg} were computed by using the MATLAB function~\texttt{eig}.

For comparison, the following folklore methods were tested.
Let $V \in \mathbb{C}^{n \times m}$ be a pseudo-random matrix. 
On the one hand, suppose $m < n$.
Then, the eigenpairs of square matrix pencil~$z BV - AV$ were computed by using the MATLAB function~\texttt{eig}.
This method is referred to as folklore method~1 herein.
Additionally, we computed the eigenpairs of the square matrix pencil
\begin{align}
	z
	\begin{bmatrix}
		B \\
		\mathrm{O}		
	\end{bmatrix}
	- 
	\begin{bmatrix}
		A \\
		\mathrm{O}
	\end{bmatrix}, \quad \mathrm{O} \in \mathbb{R}^{(n-m) \times n}
	\label{eq:zB-A;0}
\end{align}
using the MATLAB function~\texttt{eig}.
This method is referred to as folklore method~2.
On the other hand, suppose $m > n$.
Then, the eigenpairs of square matrix pencil~$z V B - V A$ were computed using the MATLAB function~\texttt{eig}.
This method is referred to as folklore method~3.
Further, the eigenpairs of square matrix pencil
\begin{align}
	z \left[ B, \mathrm{O} \right] - \left[ A, \mathrm{O}\right], \quad 
	\mathrm{O} \in \mathbb{R}^{m \times (m-n)}
	\label{eq:zB-A0}
\end{align}
were computed using the MATLAB function~\texttt{eig}.
This method is referred to as folklore method~4.
For further comparisons, we used the Rayleigh--Ritz-type contour integral\break method~\cite{IkegamiSakurai2010} for the above matrix pencils~$z BV - AV$ and $z V B - V A$, and call them folklore methods 1' and 3', respectively.
The matrix resolvent involved in this methods may not exist and was replaced with the pseudoinverse.
This pseudoinverse solution was computed similarly to the proposed method.
Applying the Rayleigh--Ritz-type contour integral method~\cite{IkegamiSakurai2010} to the above matrix pencils~\eqref{eq:zB-A;0} and \eqref{eq:zB-A0} is essentially the same as applying the proposed method to $z B - A$, when replacing the matrix resolvent involved in the former with the pseudoinverse.
Hence, we do not report on these approaches.

\subsection{Case~\texorpdfstring{$m < n$}{m < n}} \label{sec:under}
We illustrate experimental results for the case~$m < n$.
Test matrix pencils~$z B- A$ were generated as follows.
First, a matrix pencil $G(z) = z \mathrm{I}_\eta - \Lambda \in \mathbb{C}^{\eta \times \eta}$ having finite eigenvalues was generated. Here, $\Lambda \in \mathbb{C}^{\eta \times \eta}$ is a diagonal matrix whose diagonal entries are complex numbers having real and imaginary parts drawn from the standardized normal distribution.
Second, a matrix pencil $K(z) = z \oplus_{i=1}^p~N_{\rho_i} - \mathrm{I}_\rho \in \mathbb{C}^{\rho \times \rho}$ having infinite eigenvalues was generated. Here, $N_{\rho_i}$ is a shift matrix whose size was randomly chosen such that $\rho / 2$ superdiagonal entries of $\oplus_{i=1}^p~N_{\rho_i}$ are one and the remaining entries are zero.
Finally, the test matrices
\begin{align}
	A & = R_1
	\begin{bmatrix} 
		\Lambda \\
		& \mathrm{I}_\rho \\
		& & \bigzerol
	\end{bmatrix} R_2
	\in \mathbb{C}^{m \times n}, \label{eq:test_A} \\	
	B & = R_1
	\begin{bmatrix} 
		\mathrm{I}_\eta \\ 
		& \oplus_{i=1}^p~N_{\rho_i} \\
		& & \bigzerol
	\end{bmatrix} R_2
	\in \mathbb{C}^{m \times n},
	\label{eq:test_B}
\end{align}
were formed, where $R_1$ and $R_2$ were generated using the MATLAB function~\texttt{randn} for $(m, n) = (30, 100)$, $(300, 1000)$; the products of randomly generated Givens rotation matrices were used to set the nonzero density of sparse matrices~$A$ and $B$ to $0.001$ for $(m, n) = (3000, 10000)$, $(30000, 100000)$.
The block sizes~$\eta = \rho = r$, $q = n - \eta - \rho$, and $\mu = \nu = 0$ were set.

\Cref{tbl:info_m<n} gives information on the test matrix pencils. This includes the size of each matrix pencil and its Kronecker blocks, and the values of parameters for the proposed method, including the center and radius of the circle~$\Gamma$ of the prescribed region~$\Omega$, number of eigenvalues $t$ in the region, number of columns $L$ of matrix~$V$, order of complex moments~$M$, and number of quadrature points~$N$.

\begin{table}[t]
	\footnotesize	
	\centering
	\caption{Parameter values for $m < n$.}
	\begin{tabular}{rrrrrrrrrr}
		\toprule
		$m$ &         $n$ &     $\eta$ &     $\rho$ &        $q$ &    $R$ & $t$ & $L$ & $M$ &  $N$ \\ \midrule
		$30$ &       $100$ &       $10$ &       $10$ &       $10$ &    $1$ & $2$ & $4$ & $2$ & $48$ \\
		$300$ &   $1{,}000$ &      $100$ &      $100$ &      $100$ &  $0.3$ & $3$ & $4$ & $2$ & $48$ \\
		$3{,}000$ &  $10{,}000$ &  $1{,}000$ &  $1{,}000$ &  $1{,}000$ &  $0.1$ & $3$ & $8$ & $4$ & $48$ \\
		$30{,}000$ & $100{,}000$ & $10{,}000$ & $10{,}000$ & $10{,}000$ & $0.05$ & $2$ & $8$ & $4$ & $48$ \\ \bottomrule
	\end{tabular}
	\begin{minipage}{1\hsize}
		$m$:~number of rows of a matrix, $n$:~number of columns of a matrix, $\eta$:~size of the Kronecker block corresponding to finite eigenvalues, $\rho$:~size of the Kronecker block corresponding to infinite eigenvalues, $R$:~radius, $t$:~number of eigenvalues in the curve~$\Gamma$, $L$:~number of columns of the pseudo-random matrix~$V$, $M$: order of complex moments, and $N$: number of quadrature points.
	\end{minipage}
	\label{tbl:info_m<n}
	\medskip
	
	\caption{CPU time (s) and maximum relative residual norm for $m < n$.}
	\begin{tabular}{c@{\hskip 24pt}rrr@{\hskip 24pt}rrr}
		\toprule
		$(m, n)$ &                \multicolumn{3}{c}{$(30, 100)$}  &               \multicolumn{3}{c}{$(300,1000)$} \\
		&    time &       $\max$ RERR &        $\max$ RRN &   time &       $\max$ RERR &        $\max$ RRN \\
		\midrule
		folklore~1  & * 0.003 & \texttt{1.64e-14} & \texttt{1.31e-15} & * 0.39 & \texttt{1.58e-13} & \texttt{8.98e-15} \\
		folklore~1' &   0.026 & \texttt{1.95e-15} & \texttt{1.70e-16} &   2.27 & \texttt{3.40e-14} & \texttt{8.60e-16} \\
		folklore~2  & * 0.003 & \texttt{8.95e-15} & \texttt{9.98e-17} &   0.92 & \texttt{8.77e-15} & \texttt{2.73e-16} \\
		GUPTRI   	&   0.110 & \texttt{7.74e-15} &               --- &   7.66 & \texttt{7.55e-15} &               --- \\
		proposed 	&   0.062 & \texttt{5.48e-15} & \texttt{5.24e-16} &   3.52 & \texttt{3.20e-14} & \texttt{1.99e-15} \\
		\bottomrule
	\end{tabular}
	
	\begin{tabular}{c@{\hskip 24pt}rrr@{\hskip 24pt}rrr}
		\toprule
		$(m, n)$ &              \multicolumn{3}{c}{$(3000, 10000)$} &               \multicolumn{3}{c}{$(30000, 100000)$} \\
		&     time &       $\max$ RERR &        $\max$ RRN &      time &       $\max$ RERR &          $\max$ RRN \\
		\midrule
		folklore~1  &    228.6 & \texttt{1.46e-14} & \texttt{6.12e-14} &    $>$24h &               --- &                 --- \\
		folklore~1' & 17{,}930 & \texttt{2.03e-14} & \texttt{3.15e-16} &    $>$24h &               --- &                 --- \\
		folklore~2  &    159.8 &               --- &               --- &    $>$24h &               --- &                 --- \\
		GUPTRI      &  7{,}503 & \texttt{9.66e-14} &               --- &       --- &               --- &                 --- \\
		proposed    &  * 77.84 & \texttt{7.83e-15} & \texttt{5.12e-16} & * 8{,}372 & \texttt{2.01e-14} & \texttt{3.82e-16}   \\
		\bottomrule
	\end{tabular}
	\begin{minipage}{1\hsize}
		$m$: number of rows of a matrix, $n$: number of columns of a matrix, time:~elapsed CPU time (s), $\max$ RERR:~maximum relative error, $\max$ RRN:~maximum relative residual norm.
	\end{minipage}
	\label{tbl:resnrm_time_m<n}
\end{table}

\Cref{tbl:resnrm_time_m<n} gives the elapsed CPU time of each method in seconds, as well as the maximum relative error (max RERR) of eigenvalues in the prescribed region and the maximum RRN (max RRN) in terms of eigenvalues in the prescribed region and the corresponding eigenvectors.
The symbol~$*$ indicates the least CPU time among the compared methods for each test problem.
\Cref{tbl:resnrm_time_m<n} shows that the proposed method was faster than GUPTRI in terms of CPU time and was more robust than other methods.
Folklore methods~1, 1', and 2 were faster than the proposed method for the cases~$(m, n) = (30, 100)$, $(300, 1000)$; however they did not terminate within 24 hours for the case~$(m, n) = (30000, 100000)$.
Folklore method~2 computed only one of the three eigenvalues in the prescribed region for the case~$(m, n) = (3000, 10000)$.
GUPTRI fails to give finite eigenvalues for the case~$(m, n) = (30000, 100000)$.
This may be due to rounding errors.
Note that GUPTRI does not compute the eigenvectors and hence the maximum RNN for the method is not given.

To examine the effect of the number of quadrature points $N$ on the accuracy of the proposed method, we tested the method on different numbers of $N = 8$, $9$, $\dots$, $48$.
Here, the values of the other parameters were the same as those given in \cref{tbl:info_m<n}.
\Cref{fig:N_vs_maxRRN} shows the maximum RERR and RRN and CPU time in seconds versus the number of quadrature points~$N$ on the above test matrix pencil with $(m, n) = (3000, 10{,}000)$.
This figure shows that the maximum RERR and RRN seem to exponentially converge to the machine epsilon regarding $N$, and the CPU time increased proportionally to $N$.
The Supplementary Material describes the relationship between the quadrature error and the number of quadrature points in a diagonalizable case.
We see that the number of quadrature points~$N = 48$ chosen for the above tests is sufficiently large and taken as a safeguard.
Similar trends were observed for the other problems in this study.
However, it is not clear how to find the least number of quadrature points~$N$ required to achieve a specified accuracy in general.

\Cref{tbl:proportion} gives the proportion of CPU time in seconds for each step of \cref{alg} with quadrature points~$N = 48$ on the above test matrix pencils.
The CPU time for step~2 occupies more than 98 \% of the total CPU time.
As the matrix size increases, the proportion of step~2 increases and approaches one.
The experiments were performed on a serial computer; the elapsed CPU time would be approximately $1/N$ or $1/(LN)$ (cf.~\eqref{eq:LS}) for large problems when the solutions of the (block) least squares problems in step~2 of \cref{alg} are implemented in parallel.

\subsection{Case~\texorpdfstring{$m > n$}{m > n}} \label{sec:over}
We show experimental results for the case~$m > n$.
Test matrix pencils~$z B- A$ were generated as in \eqref{eq:test_A} and \eqref{eq:test_B} with block sizes~$\eta = \rho = q$, $r = m - \eta - \rho$, and $\mu = \nu = 0$.
As in \cref{tbl:info_m<n}, \cref{tbl:info_m>n} gives information on the test matrix pencils.

\Cref{tbl:resnrm_time_m>n} gives the elapsed CPU time of each method, as well as the maximum RERR and RRN, similarly to \cref{tbl:resnrm_time_m<n}.
The table shows that the proposed method was faster than GUPTRI in terms of CPU time and was more robust than other methods.
Folklore methods~3 and 4 were faster than the proposed method for the cases~$(m, n) = (100, 30)$, $(1000, 300)$; however they took more than 24 hours for the case~$(m, n) = (100000, 30000)$.
Folklore method~3 did not give eigenvalues in the prescribed region for the case~$(m, n) = (10000, 3000)$.
Folklore method~4 computed only two of the three eigenvalues in the prescribed region for the case~$(m, n) = (10000, 3000)$.
GUPTRI failed to give finite eigenvalues for $(m, n) = (100000, 30000)$.
This may be due to rounding errors.

{
	\begin{figure}
		\centering
		\includegraphics[width=0.5\linewidth]{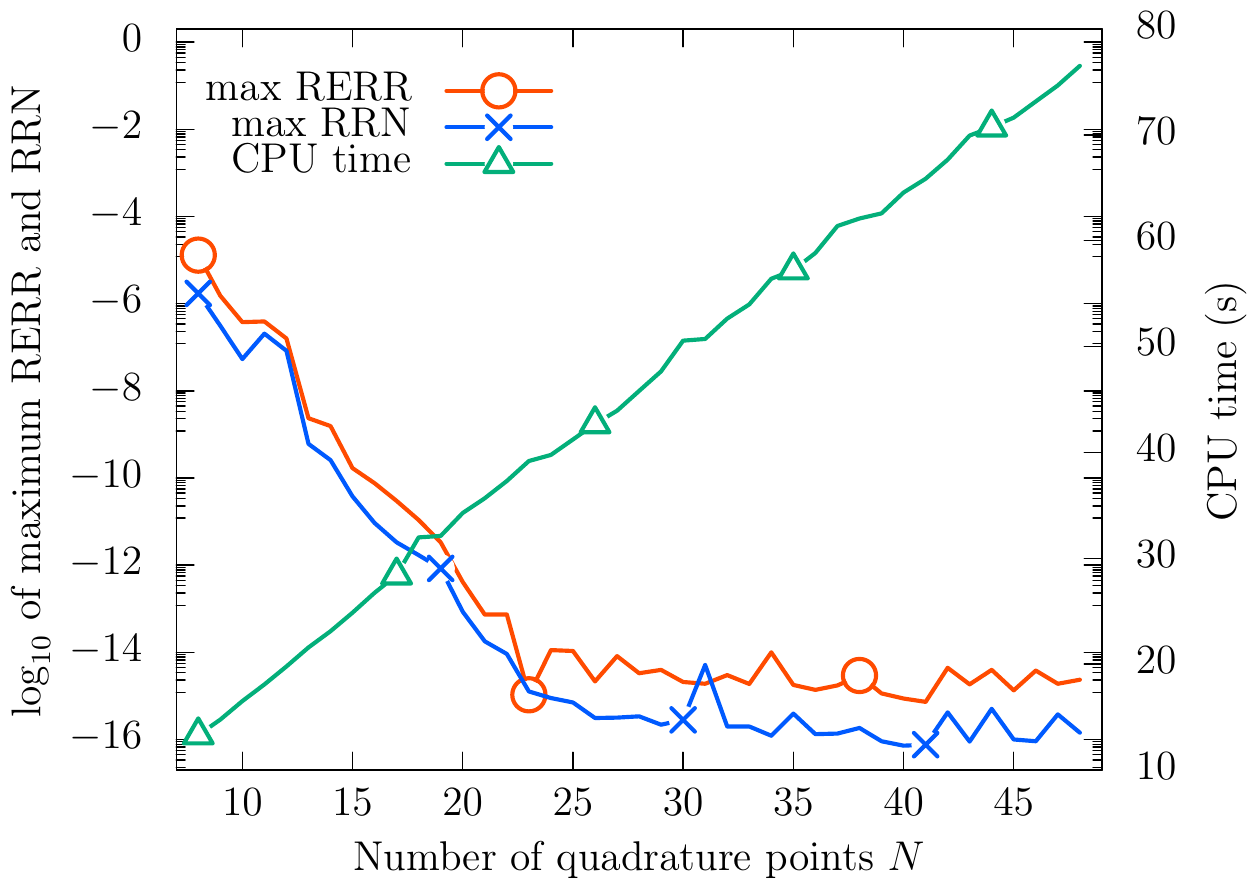}
		\caption{Maximum relative error, maximum relative residual norm, and CPU time (s) versus the number of quadrature points~$N$.}
		\label{fig:N_vs_maxRRN}
	\end{figure}
	\begin{table}
		\footnotesize
		\centering
		\caption{Proportion (\%) of the CPU time (s) for each step of \cref{alg}.}
		\begin{tabular}{rr@{\hskip 24pt}rrrrrrrr}
			\toprule
			$m$ &         $n$ & \multicolumn{5}{c}{Step}\\
			&             &          1 &      2 &      3 &      4 &      5 \\ \midrule
			$30$ &       $100$ & $0.65$ & $98.38$ & $0.40$ & $0.48$ & $0.10$ \\
			$300$ &   $1{,}000$ & $0.01$ & $99.93$ & $0.01$ & $0.05$ & $0.01$ \\
			$3{,}000$ &  $10{,}000$ & $0.00$ & $99.94$ & $0.03$ & $0.03$ & $0.01$ \\
			$30{,}000$ & $100{,}000$ & $0.00$ & $99.98$ & $0.01$ & $0.01$ & $0.00$ \\ \bottomrule
		\end{tabular}
		\label{tbl:proportion}
		\begin{minipage}{1\hsize}
			$m$: number of rows of a matrix, $n$: number of columns of a matrix, step: step number of \cref{alg}.
		\end{minipage}
	\end{table}
}

\begin{table}[t]
	\footnotesize
	\centering
	\caption{Parameter values for $m > n$.}
	\begin{tabular}{rrrrrrrrrr}
		\toprule
		$m$ &        $n$ &     $\eta$ &     $\rho$ &        $r$ &    $R$ & $t$ & $L$ & $M$ &  $N$ \\ \midrule
		$100$ &       $30$ &       $10$ &       $10$ &       $10$ &    $1$ & $2$ & $4$ & $2$ & $48$ \\
		$1{,}000$ &      $300$ &      $100$ &      $100$ &      $100$ &  $0.3$ & $3$ & $4$ & $2$ & $48$ \\
		$10{,}000$ &  $3{,}000$ &  $1{,}000$ &  $1{,}000$ &  $1{,}000$ &  $0.1$ & $3$ & $8$ & $4$ & $48$ \\
		$100{,}000$ & $30{,}000$ & $10{,}000$ & $10{,}000$ & $10{,}000$ & $0.05$ & $2$ & $8$ & $4$ & $48$ \\ \bottomrule
	\end{tabular}
	\begin{minipage}{1\hsize}
		$m$:~number of rows of a matrix, $n$:~number of columns of a matrix, $\eta$:~size of the Kronecker block corresponding to finite eigenvalues, $\rho$:~size of the Kronecker block corresponding to infinite eigenvalues, $R$:~radius, $t$:~number of eigenvalues in the curve~$\Gamma$, $L$:~number of columns of the pseudo-random matrix~$V$, $M$: order of complex moments, and $N$: number of quadrature points.
	\end{minipage}
	\label{tbl:info_m>n}
	\medskip
	
	\caption{CPU time (s) and maximum relative residual norm for $m > n$.}
	\begin{tabular}{l@{\hskip 10pt}rrr@{\hskip 24pt}rrr}
		\toprule
		\multicolumn{1}{c}{$(m, n)$}&\multicolumn{3}{c}{$(100, 30)$}  &              \multicolumn{3}{c}{$(1000, 300)$} \\
		&    time &       $\max$ RERR &        $\max$ RRN &   time &       $\max$ RERR &        $\max$ RRN \\
		\midrule
		folklore~3  & * 0.003 & \texttt{1.47e-12} & \texttt{3.28e-16} & * 0.58 & \texttt{5.07e-15} & \texttt{5.78e-16} \\
		folklore~3' &   0.023 & \texttt{4.69e-15} & \texttt{5.46e-16} &   1.67 & \texttt{5.11e-14} & \texttt{3.02e-15} \\
		folklore~4  &   0.007 & \texttt{1.61e-12} & \texttt{3.55e-16} &   1.04 & \texttt{4.83e-15} & \texttt{7.46e-16} \\
		GUPTRI      &   0.083 & \texttt{6.08e-15} &               --- &   5.66 & \texttt{8.83e-15} &               --- \\
		proposed 	&   0.021 & \texttt{6.20e-15} & \texttt{1.96e-15} &   2.46 & \texttt{3.99e-15} & \texttt{4.64e-16} \\
		\bottomrule
	\end{tabular}
	\medskip
	
	\begin{tabular}{l@{\hskip 24pt}rrr@{\hskip 24pt}rrr}
		\toprule
		\multicolumn{1}{c}{$(m, n)$}&\multicolumn{3}{c}{$(10000, 3000)$} &          \multicolumn{3}{c}{$(100000, 30000)$} \\
		&     time &       $\max$ RERR &        $\max$ RRN &      time &       $\max$ RERR &        $\max$ RRN \\ 
		\midrule
		folklore~3  &    188.4 &               --- &               --- &    $>$24h &               --- &               --- \\
		folklore~3' & 17{,}790 & \texttt{4.19e-15} & \texttt{6.63e-16} &    $>$24h &               --- &	              --- \\
		folklore~4  &    407.2 &               --- &               --- &    $>$24h &               --- &               --- \\
		GUPTRI      &  7{,}777 & \texttt{5.71e-14} &               --- &       --- &               --- &               --- \\
		proposed    &  * 72.00 & \texttt{7.64e-16} & \texttt{1.61e-16} & * 8{,}070 & \texttt{5.41e-15} & \texttt{5.00e-16} \\ 
		\bottomrule
	\end{tabular}
	\begin{minipage}{1\hsize}
		$m$: number of rows of a matrix, $n$: number of columns of a matrix, time:~elapsed CPU time (s), $\max$ RERR:~maximum relative error, $\max$ RRN:~maximum relative residual norm.
	\end{minipage}
	\label{tbl:resnrm_time_m>n}
\end{table}

\subsection{Case~\texorpdfstring{$\nu > 0$}{nu > 0}} \label{sec:over_nu}
We show experimental results for the case~$\nu > 0$, although the proposed method is not theoretically guaranteed to work in this case.
Test matrix pencils~$z B- A \in \mathbb{C}^{m \times n}$ were generated as follows:
\begin{align}
	A & = R_1
	\begin{bmatrix} 
		\Lambda & & \bigzerou \\
		& \mathrm{I}_\rho \\
		& & {N_\nu}^\mathsf{T} \\
		\bigzerol & & \boldsymbol{e}_\nu^\mathsf{T}
	\end{bmatrix} R_2
	\in \mathbb{C}^{m \times n}, \\	
	B & = R_1
	\begin{bmatrix} 
		\mathrm{I}_\eta & & \bigzerou \\ 
		& \oplus_{i=1}^p~N_{\rho_i} \\
		& & \mathrm{I}_\nu \\
		\bigzerol & & \boldsymbol{0}^\mathsf{T}
	\end{bmatrix} R_2
	\in \mathbb{C}^{m \times n},
\end{align}
where $\Lambda$, $\oplus_{i=1}^p~N_{\rho_i}$, $R_1$, and $R_2$ were generated as in \cref{sec:under}.
We fixed $m = 10000$, $\eta = \rho = 1000$, $r = 1$, and $R = 0.1$, and varied $\nu = 1$, $10$, $100$, and $1000$ ($n = 2001$, $2010$, $2100$, and $3000$, respectively).
Then, the number of eigenvalues in the prescribed region was $t = 3$.
The numbers of parameters were set to $L = 8$, $M = 4$, and $N = 48$.

\Cref{tbl:resnrm_time_nu} gives the elapsed CPU time of each method and the maximum RERR and RRN, similarly to \cref{tbl:resnrm_time_m<n}.
The table shows that the proposed method was faster than other methods in terms of the CPU time. 
Folklore method~4 computed only two of the three eigenvalues in the prescribed region for the case~$\nu = 1000$.
GUPTRI did not give the three eigenvalues in the prescribed region for $\nu = 10$ and $100$.

\begin{table}[t]
	\footnotesize
	\centering
	\caption{CPU time (s) and maximum relative residual norm for varied $\nu > 0$.}
	\begin{tabular}{l@{\hskip 24pt}rrr@{\hskip 24pt}rrr}
		\toprule
		\multicolumn{1}{c}{$\nu$}  &                      \multicolumn{3}{c}{$1$} &                         \multicolumn{3}{c}{$10$} \\
		&     time &       $\max$ RERR &        $\max$ RRN &     time &       $\max$ RERR &        $\max$ RRN \\ 
		\midrule
		folklore~3  &    122.8 & \texttt{5.34e-15} & \texttt{2.05e-15} &    119.5 & \texttt{9.69e-15} & \texttt{9.13e-15} \\
		folklore~3' & 81{,}312 & \texttt{3.64e-14} & \texttt{2.00e-13} &   $>$24h &               --- &               --- \\
		folklore~4  &    336.1 & \texttt{1.01e-14} & \texttt{1.20e-16} &    377.8 & \texttt{4.07e-14} & \texttt{1.49e-16} \\
		GUPTRI      &  7{,}354 & \texttt{7.90e-14} &               --- & 14{,}525 &               --- &               --- \\
		proposed    &  * 66.97 & \texttt{2.77e-15} & \texttt{2.16e-16} &  * 65.20 & \texttt{3.12e-15} & \texttt{2.04e-16} \\
		\bottomrule
	\end{tabular}
	\medskip
	
	\begin{tabular}{l@{\hskip 24pt}rrr@{\hskip 24pt}rrr}
		\toprule
		\multicolumn{1}{c}{$\nu$} &          \multicolumn{3}{c}{$100$} &         \multicolumn{3}{c}{$1000$}    \\
		&     time &       $\max$ RERR &        $\max$ RRN &    time &       $\max$ RERR &        $\max$ RRN \\
		\midrule
		folklore~3  &    118.4 & \texttt{1.62e-14} & \texttt{3.77e-15} &   317.7 & \texttt{7.35e-15} & \texttt{1.20e-14} \\
		folklore~3' &   $>$24h &               --- &               --- &  $>$24h &               --- &               --- \\
		folklore~4  &    349.1 & \texttt{5.93e-02} & \texttt{1.64e-16} &   967.3 &               --- &               --- \\
		GUPTRI      & 19{,}389 &               --- &               --- &  $>$24h &               --- &               --- \\
		proposed    &  * 67.94 & \texttt{5.16e-15} & \texttt{2.26e-16} & * 73.20 & \texttt{9.62e-15} & \texttt{4.31e-16} \\	
		\bottomrule		
	\end{tabular}
	\begin{minipage}{1\hsize}
		$\nu$:~number of columns of the left singular block, time:~elapsed CPU time (s), $\max$ RERR:~maximum relative error, $\max$ RRN:~maximum relative residual norm.
	\end{minipage}
	\label{tbl:resnrm_time_nu}
\end{table}

\section{Conclusions} \label{sec:conc}
In this study, a projection method for computing interior eigenvalues and corresponding eigenvectors of square linear matrix pencils was extended to nonsquare cases.
The proposed method was successfully demonstrated on matrix pencils with specific Kronecker structures.
Numerical experiments showed that this approach is superior to previous ones in terms of efficiency and robustness for large problems.
Experimental results conjecture that the proposed method works on matrix pencils with left singular blocks and theoretical support for this case was left open.
This extension took the Rayleigh--Ritz-type approach and may be applied to other types of projection methods~\cite{SakuraiSugiura2003JCAM,Polizzi2009,Beyn2012LAA,ImakuraDuSakurai2014}.
This study provides direction for tackling further extensions of the projection method to nonlinear nonsquare matrix eigenproblems, cf.~\cite{AsakuraSakuraiTadanoIkegamiKimura2009JSIAMLett,AsakuraSakuraiTadanoIkegamiKimura2010JJIAM,Beyn2012LAA,YokotaSakurai2013JSIAMLett}.

\section*{Acknowledgments}
The author would like to thank Professor~Ken Hayami, Professor~Tetsuya Sakurai, Doctor~Akira Imakura, and Doctor~Ning Zheng for their valuable comments.
We would like to thank the referees for their valuable comments.

\end{document}